\newtheorem{theorem}{Theorem}[section]
\newtheorem{proposition}[theorem]{Proposition}
\newtheorem{lemma}[theorem]{Lemma}
\newtheorem*{question*}{Question}
\newtheorem{remark}[theorem]{Remark}
\theoremstyle{definition}
\newtheorem{definition}[theorem]{Definition}
\newtheorem*{definition*}{Definition}
\newtheorem{example}[theorem]{Example}
\theoremstyle{remark}
\newtheorem*{remark*}{Remark}
\title{Upper tails of subgraph counts in directed random graphs}
\author{Jiyun Park}
\begin{document}

\begin{abstract}
    The upper tail problem in a sparse Erd\H{o}s-R\'enyi graph asks for the probability that the number of copies of some fixed subgraph exceeds its expected value by a constant factor. We study the analogous problem for oriented subgraphs in directed random graphs. By adapting the proof of Cook, Dembo, and Pham \cite[Theorem~1.1]{CDP2023}, we reduce this upper tail problem to the asymptotic of a certain variational problem over edge weighted directed graphs. We give upper and lower bounds for the solution to the corresponding variational problem, which differ by a constant factor of at most $2$. We provide a host of subgraphs where the upper and lower bounds coincide, giving the solution to the upper tail problem. Examples of such digraphs include triangles, stars, directed $k$-cycles, and balanced digraphs.
\end{abstract}

\maketitle

\section{Introduction}\label{sec:intro}

\subsection{The upper tail problem in random graphs}

The Erd\H{o}s-R\'enyi random graph $G(n, p)$ is defined as a graph on $n$ vertices where each edge is chosen independently with probability $p$. Given a graph $H$, what is the probability that the number of copies of $H$ in $G(n, p)$ exceeds its expectation by a constant factor? This problem, sometimes referred to as the ``infamous upper tail'' problem, has been studied extensively in recent years. Like many other upper tail problems, the problem is broken down into two steps:

\begin{enumerate}[(a)]
    \item Formulate a large deviation framework, reducing the problem to a variational problem over edge-weights.
    \item Solve the variational problem.
\end{enumerate}

For dense graphs (constant $p$), Chatterjee and Varadhan \cite{CV2011} first introduced a large deviation principle. Subsequently, Chatterjee and Dembo \cite{CD2016} showed a LDP-type statement in the sparse regime, where $n^{- \kappa(H)} \le p \le 1$ for some constant $\kappa(H) > 0$. This was followed by a series of work devoted to improving this range (see \cite{CD2020,HMS2022,A2020, E2018,BB2023}).

There has also been much progress in solving the variational problem. For dense graphs, Lubetzky and Zhao \cite{LZ2015} determined the regime where the optimum is achieved by a constant graphon for every regular graph $H$. In the sparse setting, Lubetzky and Zhao \cite{LZ2017} solved the variational problem in the case $H = K_3$ for $n^{1 /2} \ll p \ll 1$. The problem for general graphs was resolved by Bhattacharya, Ganguly, Lubetzky, and Zhao \cite{BGLZ2017} for $n^{1/\Delta} \ll p \ll 1$.

Some of these results have been extended to a wider class of random graphs. For instance, Bhattacharya and Dembo \cite{BD2021} developed a large deviation framework for sparse random regular graphs. This result was further improved by Gunby \cite{G2021}, which also studied the corresponding variational problem. Cook and Dembo \cite{cook2022typical} solved the upper tail problem for sparse exponential random graphs and also characterized the conditional structure of the Erd\H{o}s-R\'enyi graph. The large deviation theory for random hypergraphs was developed by Cook, Dembo, and Pham \cite{CDP2023}. The variational problem for hypergraphs was studied by Liu and Zhao \cite{LZ2021}, where they solved the problem for cliques and conjectured a solution for general hypergraphs.

There has also been progress on the lower tail problem, i.e., determining the probability that the number of copies of $H$ is less than $\eta$ times its expected value, where $0 < \eta < 1$. Along with the upper tail problem, Chatterjee and Varadhan \cite{CV2011} established a large deviation principle for the lower tail in dense Erd\H{o}s-R\'enyi graphs. This was extended to the sparse setting by Chatterjee and Dembo \cite{CD2016}, where they showed that the problem can be reduced to a variational problem provided $n^{-\alpha_H} \ll p$ for some $\alpha_H$. Zhao \cite{Z2017} determined that the variational problem has a constant solution when $\eta$ is sufficiently close to $1$ whereas the constant graphon is not the optimizer when $\eta$ is close to $0$.

\subsection{Directed graphs}

In this paper, we aim to derive similar results for the upper tails of directed random graphs. A \emph{directed graph}, or \emph{digraph}, is a graph $\mathsf{H} = (\mathsf{V}, \mathsf{E})$ where each edge $e \in \mathsf{E}$ is an ordered pair of vertices. We shall assume that a digraph contains no self-loops and there exist at most one directed edge for every ordered pair of vertices. The $p$-uniform Erdos-R\'enyi digraph $\mathbf{G}$ on $n$ vertices is defined so that every directed edge between two vertices exists independently with probability $p$. In other words, its incidence matrix is an element of the set
\[
    \mathcal{A}_n := \{ Z \in \mathcal{Z}_n : Z_{i,j} \in \{0, 1\}, Z_{i,i} = 0 \},
\]
where each $Z_{i, j} = 1$ with probability $p$ (if $i \ne j$) and $Z_{i, j} = 0$ otherwise.

While not as extensive as that of undirected graphs, there is still a vast amount of literature on the theory of digraphs. These results have many connections to other branches of mathematics, as well as applications to computer science, operations research, social sciences and engineering. For a broad survey on digraphs, see \cite{JG2009, JG2018} (and the references therein).

\subsection{Main result}

Let $H$ be a digraph with maximum degree $\Delta$, where the degree is the sum of in-degrees and out-degrees of a vertex. The \emph{homomorphism density} of $H$ in a weighted digraph $Q$ is defined as
\[
	t(H, Q) := \frac{1}{n^{\mathsf{v}(H)}} \sum_{\phi : \mathsf{V}(H) \to [n]} \prod_{e \in \mathsf{E}(H)} Q(\phi(e)).
\]
Define the upper tail rate and entropic optimization problem
\begin{align*}
    \mathrm{UT}_{n, p} (H, \delta) &:= - \log \mathbb{P}(t(H, \mathbf{G}) \ge (1 + \delta) p^{\mathsf{e}(H)}), \\
    \Phi_{n, p} (H, \delta) &:= \inf_{Q \in \mathcal{Q}_n} \{ I_p (Q) : t(H, Q) \ge (1 + \delta) p^{\mathsf{e}(H)} \}. \\
\end{align*}
Here, $\mathbf{G}$ is the directed Erd\H{o}s-R\'enyi graph on $n$ vertices, where each directed edge has probability $p = p(n)$ of appearing. $\mathcal{Q}_n$ refers to the collection of fractional digraphs, where each edge has weights in $[0, 1]$. $I_p (\cdot)$ is defined by
\[
    I_p(x) = x \log \frac{x}{p} + (1-x) \log \frac{1-x}{1-p}.
\]
The the analogous lower tail quantities are as follows.
\begin{align*}
    \mathrm{LT}_{n, p} (H, \delta) &:= - \log \mathbb{P}(t(H, \mathbf{G}) \le (1 - \delta) p^{\mathsf{e}(H)}), \\
    \Psi_{n, p} (H, \delta) &:= \inf_{Q \in \mathcal{Q}_n} \{ I_p (Q) : t(H, Q) \le (1 - \delta) p^{\mathsf{e}(H)} \}. \\
\end{align*}

We also define joint quantities as follows. For fixed digraphs $ \underline{H} = (H_1 , \dots , H_m)$ and $\underline{\delta} = (\delta_1 , \dots , \delta_m)$, $\mathrm{UT}_{n, p} (\underline{H}, \underline{\delta})$ denotes the joint upper-tail rate
\[
    \mathrm{UT}_{n, p} (\underline{H}, \underline{\delta}) := - \log \mathbb{P} \left( t(H_k , \mathbf{G}) \ge (1 + \delta_k) p^{\mathsf{e}(H_k)}, 1 \le k \le m \right).
\]
$\mathrm{LT}_{n, p}(\underline{H}, \underline{\delta})$, $\Phi_{n, p}(\underline{H}, \underline{\delta})$, and $\Psi_{n, p}(\underline{H}, \underline{\delta})$ are defined similarly. In Section~\ref{sec:prelim}, we prove the following LDP-type statement for the subgraph count of $H$ in a Erd\H{o}s--R\'enyi random digraph.    

\begin{theorem}[{\cite[Theorem~1.1]{CDP2023}}]
    \label{thm:gen-large-dev}
    Fix digraphs graphs $H_1 , \dots , H_m$. Let $\Delta_{\max} = \max_k \Delta(H_k)$ and $\Delta_{\max}' = \max_k \Delta'(H_k)$ ($\Delta'$ is defined in \cite{CDP2023}).
    \begin{enumerate}[(a)]
        \item Suppose $H_1 , \dots , H_k$ are oriented graphs. If $n^{-1/\Delta_{\max}'} \ll p < 1$, then for any fixed $\delta_1 , \dots , \delta_m > 0$,
        \[
            \mathrm{UT}_{n, p} (\underline{H}, \underline{\delta}) \ge (1 - o(1)) \Phi_{n ,p}(\underline{H}, \underline{\delta} - o(1)).
        \]
        \item If $n^{-1 / \Delta_{\max}} \ll p < 1$, then for any fixed $\delta_1 , \dots , \delta_m > 0$,
        \[
            \mathrm{UT}_{n, p} (\underline{H}, \underline{\delta}) \le (1 + o(1)) \Phi_{n ,p}(\underline{H}, \underline{\delta} + o(1)).
        \]
        \item Suppose $H_1 , \dots , H_k$ are oriented graphs. If $n^{-1 / \Delta_{\max}'} \log n \ll p < 1$, then for any fixed $\delta_1 , \dots , \delta_m \in (0, 1)$,
        \[
            \mathrm{LT}_{n, p} (\underline{H}, \underline{\delta}) = (1 + o(1)) \Psi_{n ,p}(\underline{H}, \underline{\delta} + o(1)).
        \]
    \end{enumerate}
\end{theorem}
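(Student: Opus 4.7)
The plan is to follow the strategy of Cook, Dembo, and Pham \cite{CDP2023}, viewing the directed Erd\H{o}s--R\'enyi graph as a $2$-uniform ``ordered'' hypergraph: the underlying Bernoulli variables are indexed by ordered pairs $(i, j)$ with $i \ne j$ rather than by unordered $2$-subsets, and a homomorphism of a digraph $H_k$ into such a configuration respects the orientation of each edge. Under this encoding, $t(H, \cdot)$, $I_p$, $\Phi_{n,p}$, and $\Psi_{n,p}$ become direct analogs of their hypergraph counterparts, and every step of the CDP argument admits a natural translation once one is careful that sums and products are taken over ordered, rather than unordered, edge orbits.

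For part (b), the inequality $\mathrm{UT}_{n, p}(\underline{H}, \underline{\delta}) \le (1 + o(1)) \Phi_{n, p}(\underline{H}, \underline{\delta} + o(1))$ would be proved by a standard tilting/planting construction. Starting from a near-optimizer $Q \in \mathcal{Q}_n$ of the variational problem, I would consider the directed random graph $\mathbf{G}^Q$ in which the ordered edge $(i, j)$ appears independently with probability $Q(i, j)$. A short perturbation/rounding step shows $t(H_k, \mathbf{G}^Q) \ge (1 + \delta_k - o(1)) p^{\mathsf{e}(H_k)}$ with probability $1 - o(1)$, and a change-of-measure whose log-density aggregates to $\sum_{i \ne j} I_p$ applied to the edge indicators delivers the desired lower bound on $\mathbb{P}(t(H_k, \mathbf{G}) \ge (1 + \delta_k) p^{\mathsf{e}(H_k)})$. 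The directed setting enters only through the indexing of the tilt, so this is essentially verbatim CDP.

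For part (a), the matching bound $\mathrm{UT}_{n, p}(\underline{H}, \underline{\delta}) \ge (1 - o(1)) \Phi_{n, p}(\underline{H}, \underline{\delta} - o(1))$ is the technical heart. Following CDP2023, I would decompose the ordered edge set of $\mathbf{G}$ into a small ``seed'' $S$ collecting those directed edges whose marginal contribution to some $t(H_k, \cdot)$ is unusually large, together with a ``bulk'' that is approximately uniform. A regularity/covering step produces a subexponential net of approximate seed profiles; on each profile the subgraph counts concentrate via a directed analog of the polynomial concentration inputs used in CDP2023, so that the exceedance event reduces to a deterministic statement about the seed whose entropic cost is at least $\Phi_{n, p}(\underline{H}, \underline{\delta} - o(1))$. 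The role of $\Delta_{\max}'$ is to bound the combinatorial complexity of the dominant seed family, and the assumption $p \gg n^{-1/\Delta_{\max}'}$ makes this complexity negligible on the log-probability scale. The main obstacle I expect is verifying that the underlying combinatorial estimates, such as counts of ordered homomorphisms, mixed partial derivatives of $t(H, \cdot)$ in each edge coordinate, and Janson/martingale inputs for bulk concentration, behave correctly after one distinguishes $(i, j)$ from $(j, i)$; these reduce to somewhat fiddly bookkeeping with orbits of the action on ordered edges but present no conceptual barrier.

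Part (c) proceeds in parallel. The lower tail is insensitive to rare seed configurations, and both directions are sharp under the slightly stronger hypothesis $p \gg n^{-1/\Delta_{\max}'} \log n$; the negative-correlation and Janson-type inputs of the CDP lower-tail argument transfer once the ordered/unordered translation of the previous paragraph is in place, and the planting direction again follows by tilting as in part (b).
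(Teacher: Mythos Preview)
Your plan is essentially the paper's own: it does not give an independent proof but simply declares that the argument of \cite{CDP2023} goes through verbatim once one indexes Bernoulli variables by ordered pairs $(i,j)$ rather than unordered ones, with part~(b) corresponding to \cite[Section~9]{CDP2023} (tilting, valid for arbitrary digraphs) and parts~(a),~(c) to the regularity/counting machinery (requiring oriented $H_k$). The one refinement worth noting is that the paper singles out the counting lemma \cite[Theorem~4.1]{CDP2023} as the precise place where the oriented hypothesis is essential---it genuinely fails when two antiparallel edges share a vertex pair (cf.\ Example~\ref{ex:C2})---so your ``fiddly bookkeeping'' remark is accurate for everything else but understates this one point.
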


Though we have omitted the definition of $\Delta'$, we remark that $\Delta' \le \Delta + 1$. In the case where $\underline{H}$ consists only of stars, this can be improved to $\Delta' = \Delta + 1/2$.

Given the definitions in Section~\ref{sec:lower-bd}, the proof can be done along the same lines as in the one given by Cook, Dembo, and Pham \cite{CDP2023} (and will not be repeated here). We only need to make some minor modifications, such as summing over ordered pairs instead of unordered pairs. One example is in the proof of \cite[Lemma~5.1]{CDP2023}. In our setting, entries of $\boldsymbol{A}$ are no longer symmetric. This means that we need not quotient $[n]^2$ by the symmetric group, but this only simplifies the proof. When following the proofs in \cite{CDP2023}, assume $H$ to be an oriented graph in all sections except \cite[Section~9]{CDP2023}. In \cite[Section~9]{CDP2023}, we may weaken this condition and only assume that $H$ is a digraph.

For parts (a) and (c), It is important that the graphs are oriented. This is because the counting lemma \cite[Theorem~4.1]{CDP2023} fails once we have multiple edges between a pair of vertices. For an example of this in the undirected setting, see \cite[Exercise~10.26]{L2012}. In the case of simple (but not oriented) digraphs, the following example demonstrates a counterexample to part (a).

\begin{example}
    \label{ex:C2}
    Consider $C_2$, the digraph with two vertices and edges in both directions. The arguments in proceeding sections (e.g., Example~\ref{ex:balanced}) show that
    \[
        \Phi_{n, p}(C_2 , \delta) = (1 + o(1)) \delta n^2 p^2 \log(1/p).
    \]
    However, $\hom(C_2, \mathbf{G})$ is simply the binomial distribution $\mathrm{Bin}(\frac{n(n-1)}{2}, p^2)$. By standard arguments, we can show that
    \[
        \mathrm{UT}_{n, p} (C_2 , \delta) = (1 + o(1)) \frac{1}{2}n^2 p^2 [(1 + \delta) \log (1 + \delta) - \delta].
    \]
    This implies $\mathrm{UT}_{n, p} (C_2 , \delta) \ll \Phi_{n, p}(C_2, \delta)$ if $p \ll 1$. 
    
    Another way to view $UT_{n, p}(C_2, \delta)$ is as follows. Instead of choosing each edge independently, consider the random graph $\mathbf{G}'$ where every bidirectional pair is chosen with probability $p^2$. Clearly, $\hom(C_2, \mathbf{G}) = \hom(C_2 , \mathbf{G}')$. However, $\mathbf{G}'$ can be seen as the \emph{undirected} random graph $G(n, p^2)$, where we are now counting subgraphs with two vertices and one edge between them. This means that we can use the large deviation framework already established in the undirected setting to do this analysis.
\end{example}

The remainder of this paper focuses on determining the behavior of $\Phi_{n, p}(H, \delta)$ for a single digraph $H$. We provide two theorems that respectively give upper and lower bounds on this value.

\begin{definition}
    $H^*$ is the induced subgraph of $H$ on all vertices with degree $\Delta$. Let $\mathcal{S}_H$ be the collection of all independent sets of $H^*$. For any $S \in \mathcal{S}_H$, let $T = \mathsf{N}(S)$ be the set of neighbors of $S$. Define $\mathsf{E}(S, T)$ as the set of edges from $S$ to $T$.
    \[
        A_S := |\mathsf{N}^+(S)| = \left| \{ v \in T: (u, v) \in \mathsf{E}(S, T) \text{ for some } u \in S \} \right|
    \]
    \[
        B_S := |\mathsf{N}^-(S)| = \left| \{ v \in T: (v, u) \in \mathsf{E}(T, S) \text{ for some } u \in S \} \right|.
    \]
    Further, define
    \[
        f_H(x_1 , x_2 , y_1 , y_2) := \sum_{S \in \mathcal{S}_H} x_1^{\mathsf{v}^+ (S)} x_2^{\mathsf{v}^- (S)} (x_1 \wedge x_2)^{\mathsf{v}^{\pm} (S)} y_1 ^{A_S} y_2 ^{B_S}
    \]
    and
    \[
        F(H, \delta) := \inf_{\substack{0 \le x_1, x_2 \\ 0 \le y_1 , y_2 \le 1}} \{ x_1 y_1 + x_2 y_2 : f_H(x_1, x_2, y_1, y_2) = 1 + \delta \}.
    \]
\end{definition}

\begin{theorem}[Upper bound of variational problem]\label{thm:upper-bd}
    \quad
    \begin{enumerate}[(a)]
        \item Let $H$ be a digraph with maximum degree $\Delta$ and $\delta > 0$. If $n^{-1/\Delta} \ll p \ll 1$, then
        \[
        \frac{\Phi_{n, p}(H, \delta)}{n^2 p^\Delta \log(1/p)} \le F(H, \delta) + o(1).
        \]
        Further, the infimum $F(H, \delta)$ of $f_H$ is obtained when $y_1 \vee y_2 = 1$.
        \item Let $H$ be a connected, $\Delta$-regular digraph and $\delta > 0$. If $n^{-2/\Delta} \ll p \ll 1$, then
        \[
            \frac{\Phi_{n, p}(H, \delta)}{n^2 p^\Delta \log(1/p)} \le \delta^{2 / \mathsf{v}(H)} + o(1).
        \]
        Combined with part~(a), this implies that
        \[
            \frac{\Phi_{n, p}(H, \delta)}{n^2 p^\Delta \log(1/p)} \le \min(F(H, \delta), \delta^{2 / \mathsf{v}(H)}) + o(1).
        \]
    \end{enumerate}
\end{theorem}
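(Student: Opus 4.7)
The plan for part (a) is to exhibit, for any feasible quadruple $(x_1, x_2, y_1, y_2)$ with $f_H(x_1, x_2, y_1, y_2) \ge 1 + \delta$, a fractional digraph $Q \in \mathcal{Q}_n$ satisfying $t(H, Q) \ge (1 + \delta) p^{\mathsf{e}(H)}$ and $I_p(Q) \le (x_1 y_1 + x_2 y_2 + o(1)) n^2 p^\Delta \log(1/p)$; the stated inequality then follows by taking the infimum over such quadruples. Fix subsets $U_1, U_2 \subseteq [n]$ with $|U_i| = \lceil x_i n p^\Delta \rceil$ and $V_1, V_2 \subseteq [n]$ with $|V_i| = \lceil y_i n \rceil$, arranged so that $|U_1 \cap U_2| \approx (x_1 \wedge x_2) n p^\Delta$ (e.g., by nesting $U_2 \subseteq U_1$ when $x_2 \le x_1$), $|V_1 \cap V_2| \approx y_1 y_2 n$ (e.g., by independent random sampling), and the $U_j$ disjoint from the $V_k$. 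Define $Q(u, v) = 1$ on $(U_1 \times V_1) \cup (V_2 \times U_2)$, $Q(v, v) = 0$, and $Q(u, v) = p$ elsewhere; then $I_p(Q) \le (|U_1||V_1| + |V_2||U_2|) \log(1/p) = (x_1 y_1 + x_2 y_2) n^2 p^\Delta \log(1/p)(1 + o(1))$, using $I_p(1) = \log(1/p)$.

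For the density bound, partition the homomorphisms $\phi : V(H) \to [n]$ by $S := V(H^*) \cap \phi^{-1}(U_1 \cup U_2)$ and retain only those $\phi$ with $S \in \mathcal{S}_H$ whose embedding respects the planted structure: send $S^+$ into $U_1$, $S^-$ into $U_2$, $S^\pm$ into $U_1 \cap U_2$, $\mathsf{N}^+(S)$ into $V_1$, $\mathsf{N}^-(S)$ into $V_2$ (overlap going to $V_1 \cap V_2$), and the remaining $V(H^*)$-vertices outside $U_1 \cup U_2$. Since $S$ is independent in $H^*$, all $\Delta \mathsf{v}(S)$ edges of $H$ incident to $S$ land in the planted region and get weight $1$, while the remaining $\mathsf{e}(H) - \Delta \mathsf{v}(S)$ edges receive weight $p$; each such $\phi$ thus contributes product $p^{\mathsf{e}(H) - \Delta \mathsf{v}(S)}$. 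Counting embeddings and dividing by $n^{\mathsf{v}(H)}$ gives per-$S$ contribution $x_1^{\mathsf{v}^+(S)} x_2^{\mathsf{v}^-(S)} (x_1 \wedge x_2)^{\mathsf{v}^\pm(S)} y_1^{A_S} y_2^{B_S} p^{\mathsf{e}(H)} (1 + o(1))$, summing over $S \in \mathcal{S}_H$ to $f_H \cdot p^{\mathsf{e}(H)} = (1 + \delta) p^{\mathsf{e}(H)}$.

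The claim that $F(H, \delta)$ is attained at $y_1 \vee y_2 = 1$ follows from a rescaling lemma. Given any feasible $(x_1, x_2, y_1, y_2)$ with $\max y_i < 1$, set $t = 1/\max(y_1, y_2) > 1$ and consider the family $(s x_1, s x_2, t y_1, t y_2)$ with parameter $s \in (0, 1]$; uniform scaling of the $x_i$ preserves the $\wedge$ structure, so each term of $f_H$ scales by $s^{\mathsf{v}(S)} t^{A_S + B_S}$. I will establish the pigeonhole inequality
\[
    A_S + B_S \;\ge\; \mathsf{v}(S)
\]
by observing that each $v \in S$ has total degree $\Delta$ in $H$ (all incident edges going to $T$, by independence), contributing $d_H^+(v)$ edges into $A_S$ and $d_H^-(v)$ edges from $B_S$; since every $T$-endpoint has in- and out-degree at most $\Delta$, we get $\Delta(|A_S| + |B_S|) \ge \sum_{v \in S}(d_H^+(v) + d_H^-(v)) = \Delta \mathsf{v}(S)$. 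Hence at $s = 1/t$ each nonempty-$S$ summand multiplies by $t^{A_S + B_S - \mathsf{v}(S)} \ge 1$, forcing $f_H \ge 1 + \delta$; by continuity the restoration value $s^* \in (0, 1/t]$ exists, and the new objective $s^* t (x_1 y_1 + x_2 y_2) \le x_1 y_1 + x_2 y_2$ while $\max(t y_i) = 1$.

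For part (b), a planted directed clique suffices: take $U \subseteq [n]$ with $|U| = m := \lceil \delta^{1/\mathsf{v}(H)} p^{\Delta/2} n \rceil$, and set $Q(u, v) = 1$ on $U \times U \setminus \{\mathrm{diagonal}\}$, $Q(u, v) = p$ elsewhere. Using $\mathsf{e}(H) = \Delta \mathsf{v}(H)/2$, one has $(m/n)^{\mathsf{v}(H)} = \delta p^{\mathsf{e}(H)}(1 + o(1))$; injective homomorphisms $\phi : V(H) \hookrightarrow U$ contribute $m(m-1) \cdots (m - \mathsf{v}(H) + 1)/n^{\mathsf{v}(H)} \ge (1 - o(1)) \delta p^{\mathsf{e}(H)}$ (valid once $m \to \infty$, i.e., $p \gg n^{-2/\Delta}$), and embeddings avoiding $U$ supply the baseline $(1 - o(1)) p^{\mathsf{e}(H)}$, so $t(H, Q) \ge (1 + \delta)(1 - o(1)) p^{\mathsf{e}(H)}$; the cost is $I_p(Q) \le m(m-1) \log(1/p) \le (\delta^{2/\mathsf{v}(H)} + o(1)) n^2 p^\Delta \log(1/p)$. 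The main technical obstacle is the careful combinatorial accounting in part (a)—ensuring the intersections $|U_1 \cap U_2| \approx (x_1 \wedge x_2) n p^\Delta$ and $|V_1 \cap V_2| \approx y_1 y_2 n$ are realized up to $o(1)$ relative error—so that the per-$S$ contributions reproduce exactly the summand structure of $f_H$.
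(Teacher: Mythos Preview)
Your argument is correct and follows the same construction as the paper: plant a directed hub for part~(a) and a clique for part~(b). The execution of part~(a) differs in two minor but instructive ways. First, the paper takes the hub neighbourhoods nested so that $|B_1 \cap B_2| \sim (y_1 \wedge y_2)\,n$, computes $t(H,Q)$ by a full six-part expansion of $\mathsf{V}(H)$, and obtains an auxiliary function $\bar f_H$ (with $(y_1\wedge y_2)^{\mathsf{n}^{\pm}(S)}$ in place of $(y_1 y_2)^{\mathsf{n}^{\pm}(S)}$), which is then shown to agree with $f_H$ once $y_1\vee y_2=1$. You instead arrange $|V_1\cap V_2|\approx y_1 y_2\,n$ and only \emph{lower}-bound $t(H,Q)$ by restricting to homomorphisms that respect the planted structure; this yields the summand $y_1^{A_S}y_2^{B_S}$ of $f_H$ directly, bypassing $\bar f_H$. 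Both routes give the same final bound because the infimum is attained at $y_1\vee y_2=1$, where $y_1 y_2 = y_1\wedge y_2$.

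Second, you require $U_j\cap V_k=\emptyset$, whereas the paper takes $B_k\supseteq A_1\cup A_2$. Your disjointness is not literally realizable when some $y_k=1$ (then $|V_k|=n$), so you should either switch to the paper's containment convention (the overlap contributes only $O(p^{2\Delta}n^2)$ to the entropy and is harmless), or argue by continuity with $y_k\uparrow 1$. Your rescaling proof that the infimum is attained at $y_1\vee y_2=1$, via the pigeonhole bound $A_S+B_S\ge |S|$, is exactly the paper's argument. Part~(b) matches the paper's clique construction.
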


The proof of this theorem will be given in Section~\ref{sec:upper-bd}. We proceed to lower bounds of $\Phi_{n, p}$.
\begin{definition}
    For each independent set $S$ of $H^*$, let $T = \mathsf{N}(S)$ be the neighbors of $S$. Define $F$ as the bipartite graph with vertices $S \cup T$ and edges $\mathsf{E}(S, T) \cup \mathsf{E}(T, S)$. Let
\begin{align*}
a_S &:= \max \{ \sum_{e \in \mathsf{E}(S^, T)} w_F(e) : w_F \text{ is a maximum fractional matching}\} \\
b_S &:= \max \{ \sum_{e \in \mathsf{E}(T, S)} w_F(e) : w_F \text{ is a maximum fractional matching}\}. \\
\end{align*}
We define $g_H$ and $G(H, \delta)$ as follows.
\[
    g_H(x_1 , x_2 , y_1 , y_2) := \sum_{S \in \mathcal{S}_H} x_1^{\mathsf{v}^+(S)} x_2^{\mathsf{v}^-(S)} (x_1 \wedge x_2)
    ^{\mathsf{v}^{\pm}(S)} y_1^{a_S} y_2^{b_S}.
\]
\[
    G(H, \delta) := \inf_{\substack{0 \le x_1, x_2 \\ 0 \le y_1 , y_2 \le 1}} \{ x_1 y_1 + x_2 y_2 : g_H(x_1, x_2, y_1, y_2) = 1 + \delta \}.
    \]
\end{definition}

For the definition of a maximum fractional matching, see Section~\ref{sec:lower-bd}.

\begin{theorem}[Lower bound of variational problem]\label{thm:lower-bd}
    Let $H$ be a connected digraph with maximum degree $\Delta \ge 2$ and $\delta > 0$. Then,
    \[
        \frac{\Phi_p(H, \delta)}{p^\Delta \log(1/p)} \ge 
        \begin{cases}
            G(H, \delta) + o(1) & \text{if $H$ is irregular} \\
            \min(G(H, \delta), \delta^{2/\mathsf{v}(H)}) + o(1) & \text{if $H$ is regular}
        \end{cases}.
    \]
    Furthermore, the infimum $G(H, \delta)$ of $g_H$ is obtained when $y_1 \vee y_2 = 1$.
\end{theorem}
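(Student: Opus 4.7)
My plan is to adapt the variational lower-bound method of Bhattacharya--Ganguly--Lubetzky--Zhao~\cite{BGLZ2017} to the directed setting. Starting from any $Q \in \mathcal{Q}_n$ with $I_p(Q) \le (G(H,\delta) - \varepsilon)\, n^2 p^\Delta \log(1/p)$, I will show that $t(H,Q) < (1+\delta - o(1))\, p^{\mathsf{e}(H)}$; the lower bound then follows by contrapositive. The first step is to reduce $Q$ to a clustered skeleton. Edges with $Q(u,v)$ sufficiently close to $p$ can be replaced by $p$ at negligible cost in both entropy and homomorphism count, so we focus on the remaining heavy edges. A dyadic pigeonhole in the heavy-edge magnitude and in the heavy out- and in-degrees of each vertex partitions the heavy structure into a constant number of bipartite hub clusters, each characterized by four parameters: a hub size $\approx n x_i$ and a typical heavy edge weight $\approx y_j$, with $i,j \in \{1,2\}$ indexing out versus in. A direct computation shows $I_p(Q) = (1-o(1))(x_1 y_1 + x_2 y_2)\, n^2 p^\Delta \log(1/p)$ on the skeleton.

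The second step bounds $t(H,Q)$ by summing over embeddings $\phi : \mathsf{V}(H) \to [n]$ stratified by which vertices of $H$ land in hubs. If two adjacent vertices of $H^*$ were both mapped to hubs, the joint cost on that edge would exceed our budget, so $\phi^{-1}(\mathrm{hubs})$ must be an independent set $S$ of $H^*$, with its neighborhood $T = \mathsf{N}(S)$ mapping to generic vertices contributing the baseline $p$ on each incident edge. For fixed $S \in \mathcal{S}_H$, the number of such embeddings factors into a vertex-choice term $x_1^{\mathsf{v}^+(S)} x_2^{\mathsf{v}^-(S)} (x_1 \wedge x_2)^{\mathsf{v}^\pm(S)}$---the $\wedge$ arises because a vertex in $\mathsf{v}^\pm(S)$ must lie in both an out-hub and an in-hub---and an edge-count term over the bipartite graph $F$ on $S \cup T$. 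The latter is controlled by a directed Finner/H\"older inequality, where each edge of $F$ is charged to its hub endpoint in $S$ (picking up $y_1$ or $y_2$ according to direction) via weights drawn from a fractional matching of $F$, and LP duality on the max fractional matching polytope identifies the sharp exponents as precisely $a_S$ and $b_S$. Summing over $S \in \mathcal{S}_H$ yields $t(H,Q) \le (1+o(1))\, g_H(x_1, x_2, y_1, y_2)\, p^{\mathsf{e}(H)}$, forcing $g_H(x_1,x_2,y_1,y_2) \ge 1+\delta - o(1)$ and so the desired bound $I_p(Q) \ge (G(H,\delta) - o(1))\, n^2 p^\Delta \log(1/p)$.

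For regular $H$, I must separately compare against the globally uniform competitor $\delta^{2/\mathsf{v}(H)}$ from Theorem~\ref{thm:upper-bd}(b); the clustered ansatz does not dominate this case, so an additional direct entropy-vs-count inequality is needed, in the spirit of Lubetzky--Zhao~\cite{LZ2015}, applied to the ``almost constant'' part of $Q$ that survives after stripping the skeleton. The boundary claim $y_1 \vee y_2 = 1$ follows from a rescaling argument: if both $y_i < 1$ at the optimum, then for the smaller $y_i$ the substitution $(x_i, y_i) \mapsto (x_i y_i, 1)$ preserves the objective $x_1 y_1 + x_2 y_2$ while weakly decreasing every summand of $g_H$, using the bounds $a_S \le \mathsf{v}^+(S) + \mathsf{v}^\pm(S)$ and $b_S \le \mathsf{v}^-(S) + \mathsf{v}^\pm(S)$ together with $y_i < 1$, contradicting minimality. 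The principal obstacle is the H\"older step: in the directed setting each edge of $F$ admits two natural charges, to its tail-vertex's out-hub or to its head-vertex's in-hub, and verifying that LP duality yields the exponents $a_S, b_S$ as defined (rather than some asymmetric fractional cover/matching quantity) requires a careful directed extension of the bipartite matching arguments of~\cite{BGLZ2017}.
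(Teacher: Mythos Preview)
Your core strategy---stratify embeddings by the independent set $S\in\mathcal{S}_H$ landing in high-degree hubs, then bound each stratum via Finner's inequality with weights from a maximum fractional matching of the associated bipartite graph $F$---matches the paper, and your identification of $a_S,b_S$ as the sharp H\"older exponents is correct. But the way you extract the parameters $(x_1,x_2,y_1,y_2)$ from $W$ does not line up with $g_H$. You interpret $y_j$ as a ``typical heavy edge weight'', yet in both the upper-bound construction and the definition of $g_H$ the $y_j$ are neighborhood \emph{sizes}, not edge magnitudes. More importantly, the paper does not try to recover a hub structure from $W$ via dyadic pigeonhole at all: it fixes a single degree threshold $b$, records the four normalized entropy integrals $\theta_b^+,\theta_b^-,\theta_b^{\pm,+},\theta_b^{\pm,-}$ over $B_b^+\times\overline{B_b}$, $\overline{B_b}\times B_b^-$, $B_b^{\pm}\times\overline{B_b}$, $\overline{B_b}\times B_b^{\pm}$, and then \emph{defines} $x_1,x_2,y_1,y_2$ purely algebraically via $\theta_b^+=x_1y_1$, $\theta_b^-=x_2y_2$, $\theta_b^{\pm,+}=x_3y_1$, $\theta_b^{\pm,-}=x_3y_2$ with $x_3=\theta_b^{\pm,+}\vee\theta_b^{\pm,-}$. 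This already forces $y_1\vee y_2=1$ and $x_3\le x_1\wedge x_2$. Applying Finner once with a matching realizing $a_S$ and once with one realizing $b_S$ then gives the two bounds $t(H,W)\le p^{\mathsf{e}(H)}g_H(x_1,x_2,y_1,1)$ and $t(H,W)\le p^{\mathsf{e}(H)}g_H(x_1,x_2,1,y_2)$, one of which equals $p^{\mathsf{e}(H)}g_H(x_1,x_2,y_1,y_2)$. There is no LP-duality step, and only $\mathbb{E}[I_p(W)]\ge(\theta_b^++\theta_b^-)p^\Delta\log(1/p)$ is used, not the equality you claim.

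Your argument for the boundary claim $y_1\vee y_2=1$ is genuinely backwards. A substitution preserving $x_1y_1+x_2y_2$ while \emph{decreasing} $g_H$ moves you off the constraint $g_H=1+\delta$ into the infeasible region and contradicts nothing; to contradict minimality you need $g_H$ to \emph{increase}. The correct move is the joint rescaling $(x_1,x_2,y_1,y_2)\mapsto(x_1/\lambda,x_2/\lambda,\lambda y_1,\lambda y_2)$ with $\lambda>1$, under which the $S$-monomial scales by $\lambda^{a_S+b_S-|S|}$. Since every maximum fractional matching on $F$ has total weight $\sum_e w_F(e)=|S|$, the one realizing $a_S$ puts weight $|S|-a_S$ on $\mathsf{E}(T,S)$, whence $b_S\ge|S|-a_S$; thus each exponent is nonnegative and $g_H$ weakly increases. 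The inequalities $a_S\le\mathsf{v}^+(S)+\mathsf{v}^\pm(S)$ and $b_S\le\mathsf{v}^-(S)+\mathsf{v}^\pm(S)$ that you invoke point the wrong way for this purpose. Finally, for regular $H$ the paper isolates a separate ``clique'' term $\eta_b$ over $\overline{B_b}\times\overline{B_b}$ and decouples it from the hub terms by the convexity Lemma~\ref{lem:convex}, reducing to either $x_1=x_2=0$ or $z=0$; your sketch of this step is too thin to stand on its own.
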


We remark that $a_S \le A_S$ and $b_S \le B_S$ for any $S \in \mathcal{S}_H$. This implies $f_H \le g_H$ and hence $F(H, \delta) \ge G(H, \delta)$. In other words, the upper bound is indeed greater than the lower bound. The proof to Theorem~\ref{thm:lower-bd} is given in Section \ref{sec:lower-bd}.

Finally, in Section~\ref{sec:gap}, we compare the upper and lower bounds, and give conditions on when they equal each other. Some of our results are summarized as follows.

\begin{proposition}\label{prop:sec-5-results}
    \quad
    \begin{itemize}
        \item (Proposition~\ref{prop:x1=x2}) $G(H, \delta)$ is obtained when $x_1 = x_2$ 
        \item (Proposition~\ref{prop:F-2G}) $G(H, \delta) \le F(H, \delta) \le 2G(H, \delta)$ 
        \item (Proposition~\ref{prop:2s}) If $2a_S, 2b_S \ge |S|$ for all $S \in \mathcal{S}_H$, then $G(H, \delta)$ is obtained when $y_1 = 1$. Similarly, if $2b_S \ge |S|$ for all $S \in \mathcal{S}_H$, then $G(H, \delta)$ is obtained when $y_2 = 1$.
        \item (Remark~\ref{rmk}) If $G(H, \delta)$ is obtained when $y_1 = y_2 = 1$, then $G(H, \delta) = F(H, \delta)$. 
        \item (Remark~\ref{rmk}) If $a_S = A_S$ for all $S \in \mathcal{S}$ and $G(H, \delta)$ is obtained when $y_2 = 1$, then $F(H, \delta) = G(H, \delta)$. Similar for $b_S = B_S$.
    \end{itemize}
\end{proposition}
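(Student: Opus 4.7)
The five bullets range from near-trivial to substantial, and my plan is to dispatch them in increasing order of difficulty, leaving bullet~1 as the main obstacle at the end.

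The inequality $G(H,\delta) \le F(H,\delta)$ in bullet~2 together with bullets~4 and~5 all rest on the term-wise bound $f_H \le g_H$ on $[0,\infty)^2 \times [0,1]^2$, valid since $a_S \le A_S$, $b_S \le B_S$, and $y_1, y_2 \in [0,1]$. Any $F$-feasible point is thus $G$-feasible at the same cost, giving $G \le F$. The exponents of $y_i$ on the two sides agree precisely when either $y_1 = y_2 = 1$, or $y_2 = 1$ and $a_S = A_S$ for every $S$ (and the symmetric variant with $y_1 = 1$ and $b_S = B_S$); in each such case $f_H = g_H$ at the candidate $G$-optimum, so $F \le G$ as well, establishing bullets~4 and~5.

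For the reverse bound $F \le 2G$ in bullet~2 I use the other two claims as inputs: by bullet~1 the infimum $G(H,\delta)$ is attained at some $(x, x, y_1, y_2)$, and by the furthermore-clause of Theorem~\ref{thm:lower-bd} I may further assume $y_1 \vee y_2 = 1$, so $y_1 + y_2 \ge 1$. The point $(x, x, 1, 1)$ is then $F$-feasible, because
\[
    f_H(x, x, 1, 1) = \sum_{S \in \mathcal{S}_H} x^{|S|} \ge \sum_{S \in \mathcal{S}_H} x^{|S|} y_1^{a_S} y_2^{b_S} = g_H(x, x, y_1, y_2) = 1 + \delta,
\]
and its cost $2x$ satisfies $2x \le 2x(y_1 + y_2) = 2 G(H,\delta)$. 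For bullet~3, again assuming bullet~1, I reduce to $x_1 = x_2 = x$, and the theorem's furthermore-clause leaves two cases. If $y_1 = 1$ there is nothing to do; if instead $y_2 = 1$, I apply the substitution $(x, x, y_1, 1) \mapsto (x\sqrt{y_1}, x\sqrt{y_1}, 1, 1)$. The hypothesis $2a_S \ge |S|$ yields $y_1^{a_S} \le y_1^{|S|/2}$, so the $g_H$-constraint is preserved term by term, while AM-GM gives $2x\sqrt{y_1} \le x(y_1 + 1)$, so the cost does not increase. The $y_2 = 1$ statement under $2b_S \ge |S|$ is symmetric.

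The main obstacle is bullet~1, the claim that $G(H,\delta)$ is attained on the diagonal $\{x_1 = x_2\}$. The plan is to exhibit, for any feasible $(x_1, x_2, y_1, y_2)$, a diagonal feasible point $(x, x, y_1', y_2')$ with no larger cost. I would split into the regions $\{x_1 \le x_2\}$ and $\{x_1 \ge x_2\}$, on each of which $x_1 \wedge x_2$ is smooth, and look for a substitution that preserves the $g_H$-constraint term-wise using a weighted-AM-GM bound such as $(x_1 \wedge x_2)^{\mathsf{v}^{\pm}(S)} \le (x_1 x_2)^{\mathsf{v}^{\pm}(S)/2}$, absorbing the discrepancy into a corresponding rescaling of $(y_1, y_2)$. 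The difficulty is that no single one-parameter substitution handles every $S \in \mathcal{S}_H$ simultaneously, so I anticipate a two-step argument: first equalize the products $x_i y_i$ (a swap-type move that leaves $x_1 y_1 + x_2 y_2$ fixed), and then collapse $x_1 = x_2$ via a geometric-mean step, adjusting $(y_1, y_2)$ to restore the constraint and then verifying via the furthermore-clause of Theorem~\ref{thm:lower-bd} that no cost is gained.
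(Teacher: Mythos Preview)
Your treatment of bullets 2--5 is correct and essentially matches the paper. For bullet~3 you actually improve on the paper's presentation: where the paper differentiates $x^{\alpha} y_1$ along the level curve $x(1+y_1)=C$ and checks the sign for $1\le\alpha\le 2$, your one-shot substitution $(x,x,y_1,1)\mapsto(x\sqrt{y_1},x\sqrt{y_1},1,1)$ together with $y_1^{a_S}\le y_1^{|S|/2}$ and AM--GM reaches the same conclusion more directly.

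The genuine gap is bullet~1, which you flag as the main obstacle and for which you only sketch a vague two-step plan. In fact this is the easiest bullet once you invoke the exponent bounds of Proposition~\ref{prop:asbd}:
\[
a_S \le \mathsf{v}^+(S)+\mathsf{v}^{\pm}(S), \qquad b_S \le \mathsf{v}^-(S)+\mathsf{v}^{\pm}(S).
\]
If $x_1 > x_2$ then $x_1 \wedge x_2 = x_2$, so every term of $g_H$ has the form $x_1^{\mathsf{v}^+(S)}\, x_2^{\mathsf{v}^-(S)+\mathsf{v}^{\pm}(S)}\, y_1^{a_S} y_2^{b_S}$, and the $x_2$-exponent dominates the $y_2$-exponent. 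Hence sliding $x_2$ upward toward $x_1$ while keeping the product $x_2 y_2$ (and therefore the cost $x_1 y_1 + x_2 y_2$) fixed can only increase each term of $g_H$; you reach the diagonal with a feasible point at the same cost. The case $x_2 > x_1$ is symmetric via the $a_S$ bound. That is the entire argument.

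By contrast, your proposed geometric-mean collapse $(x_1,x_2)\mapsto(\sqrt{x_1 x_2},\sqrt{x_1 x_2})$ does not control the factors $x_1^{\mathsf{v}^+(S)} x_2^{\mathsf{v}^-(S)}$ term-wise (they can move in either direction depending on $S$), and ``equalizing the products $x_i y_i$'' by a swap is not available since $g_H$ has no $(1\leftrightarrow 2)$ symmetry in general. The missing ingredient is precisely the comparison between the $x$- and $y$-exponents furnished by Proposition~\ref{prop:asbd}.
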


In particular, Proposition~\ref{prop:x1=x2} implies that finding $G(H,\delta)$ is actually a $2$-dimensional variational problem. Section~\ref{sec:gap} provides also several explicit families of digraphs with $F(H, \delta) = G(H, \delta)$. In other words, the upper tail probability is reduced to a variational problem of degree $2$ (i.e., finding the minimizer of $f_H = g_H$). Some examples of such digraphs include triangles, directed cycles, stars, and balanced digraphs. It concludes with an example of a digraph with a gap between the upper and lower bounds.

\subsection*{Acknowledgements}
I thank Amir Dembo for introducing me to this research problem and his numerous helpful discussions. 

\section{Preliminaries}\label{sec:prelim}

Let $H = (\mathsf{V}, \mathsf{E})$ be a digraph. We write $\mathsf{v}(H) := |\mathsf{V}(H)|$ and $\mathsf{e}(H) = |\mathsf{E}(H)|$. For a vertex $v$ and edge $e$, $v \sim e$ or $e \sim v$ is used to denote that $v$ is an endpoint of $e$ (in either direction). For two sets $S, T \subseteq \mathsf{V}(H)$, $\mathsf{E}(S, T)$ denotes the set of edges from $S$ to $T$, and $\mathsf{e}(S, T) := | \mathsf{E}(S, T) |$. For any sets $S \subseteq \mathsf{V}(H)$, define
\begin{align*}
    \mathsf{N}^+(S) &:= \{ v \in \mathsf{V}(H) \setminus S : \exists u \in S \text{ such that } (u, v) \in \mathsf{E}(H) \} \\
    \mathsf{N}^+(S) &:= \{ v \in \mathsf{V}(H) \setminus S : \exists u \in S \text{ such that } (v, u) \in \mathsf{E}(H) \} \\
    \mathsf{N}(S) &:= \mathsf{N}^+(S) \cup \mathsf{N}^-(S) \\
    \mathsf{N}^{\pm}(S) &:= \mathsf{N}^+(S) \cap \mathsf{N}^-(S) \\
    \mathsf{N}^{+, 0}(S) &:= \mathsf{N}^+(S) \setminus \mathsf{N}^-(S) \\
    \mathsf{N}^{-, 0}(S) &:= \mathsf{N}^-(S) \setminus \mathsf{N}^+(S)
\end{align*}
Furthermore, define $\mathsf{n}(S) = \mathsf{N}(S)$ and likewise for all other sets. We remark that $A_S = \mathsf{n}^+(S) = \mathsf{n}^{+, 0} (S) + \mathsf{n}^{\pm} (S)$ and $B_S = \mathsf{n}^-(S) = \mathsf{n}^{-, 0} (S) + \mathsf{n}^{\pm} (S)$.

Now define
\begin{align*}
    \mathsf{V}^+(H) &:= \{ v \in \mathsf{V}(H) : \mathsf{n}^-(v) = 0 \} \\
    \mathsf{V}^+(H) &:= \{ v \in \mathsf{V}(H) : \mathsf{n}^+(v) = 0 \} \\
    \mathsf{V}^{\pm}(H) &:= \{ v \in \mathsf{V}(H) : \mathsf{n}^+(v), \mathsf{n}^-(v) > 0 \} \\
\end{align*}
We may sometimes abuse notation in the form $\mathsf{V}(S)$, where $S \subseteq \mathsf{V}(H)$. Like before, let $\mathsf{v}^+(H) = |\mathsf{V}^+(H)|$ and likewise for all other sets.

The \emph{homomorphism density} of $H$ in a weighted digraph $Q$ is defined to be 
\[
	t(H, Q) := \frac{1}{n^{\mathsf{v}(H)}} \sum_{\phi : \mathsf{V}(H) \to [n]} \prod_{e \in \mathsf{E}(H)} Q(\phi(e)).
\]
This can also be seen as $t(H,Q) = n^{- \mathsf{v}(H)} \hom(H, Q)$, where $\hom(H, Q)$ is the (generalized) number of homomorphisms from $H$ to $G$, i.e. $\hom(H, Q) := \sum_{\phi : \mathsf{V}(H) \to [n]} \prod_{e \in \mathsf{E}(H)} Q(\phi(e))$. We additionally denote the normalized quantities 
\[
t(H, Q) := \frac{\hom(H, Q)}{n^{\mathsf{v}(H)}}, \quad t_p (H, Q) := t(H, Q/p) = \frac{\hom(H, Q)}{n^{\mathsf{v}(H)}p^{\mathsf{e}(H)}}.
\]
We also often abuse the notation to include $t(H, G) := t(H, A_G)$, where $A_G$ is the adjacency matrix of $G$. In this case, $t(H, G)$ is the probability that the uniform random mapping of the vertices sends edges of $H$ to edges of $G$.

Let $\mathcal{A}_n := \{ Z \in \mathcal{Z}_n : Z_{i,j} \in \{0, 1\}, Z_{i,i} = 0 \}$. These elements are naturally associated to digraphs, i.e. they are the adjacency matrices of simple digraphs. Let $\mathcal{Q}_n := hull(\mathcal{A}_n)$, i.e. the set of $n \times n$ matrices with zero diagonals and all entries lying in $[0, 1]$.

The distributions $\mu_Q$ are probability measures on $\mathcal{A}_n$. Unless otherwise stated, $\mathbb{P}$ is the measure under which $\mathbf{A}$ has distribution $\mu_p$, so that $\mathbf{A}$ is the adjacency matrix for the directed Erdos-R\'enyi graph, and $\mathbb{E}$ is the associated expectation. For $Q \in \mathcal{Q}_n$, we write $\mathbb{P}_Q$, $\mathbb{E}_Q$ for probability and expectation under which $\mathbf{A}$ has the distribution $\mu_Q$. The relative entropy between the Bernoulli measures $Bernoulli(p)$ and $Bernoulli(x)$ is denoted$$ I_p(x) = D(\mu_x \| \mu_p) = x \log(x/p) + (1-x) \log((1-x)/(1-p)).$$
With abuse of notation, we generalize this to the relative entropy of $\mu_Q$ with respect to $\mu_p$, i.e.
\[
I_p(Q) = \sum_{i \ne j} I_p(Q(i,j)).
\]

$\mathbf{G}$ denotes the $p$-uniform Erdos-R\'enyi digraph. In this case,
\[
	\mathrm{UT}_{n, p} (H, \delta) := - \log \mathbb{P}(t(H, \mathbf{G}) \ge (1 + \delta) p^{\mathsf{e}(H)}) = -\log \mathbb{P}(t_p (H, G) \ge 1 + \delta)
\]
\[
	\Phi_{n, p} (H, \delta) := \inf_{Q \in \mathcal{Q}_n} \{ I_p (Q) : t(H, Q) \ge (1 + \delta) p^{\mathsf{e}(H)} \}
\]

We use the following asymptotic notation. Let $f$ and $g$ be nonnegative-valued functions of $n$. $f \ll g$ and $f = o(g)$ means that $f/g \xrightarrow{n \to \infty} 0$, while $f \lesssim g$, $f = O(g)$ means $f/g$ is bounded. $f \asymp g$ means $f \lesssim g \lesssim f$, and $f \sim g$ implies $f/g \xrightarrow{n \to \infty} 1$.

\section{Proof of Theorem~\ref{thm:upper-bd}}\label{sec:upper-bd}

In this section, we prove Theorem~\ref{thm:upper-bd}, which provides upper bounds for the variational problem
\[
	\Phi_{n, p} (H, \delta) := \inf_{Q \in \mathcal{Q}_n} \{ I_p (Q) : t(H, Q) \ge (1 + \delta) p^{\mathsf{e}(H)} \}.
\]
These bounds are given by calculating $I_p(Q)$ for specific choices of $Q$ with $t(H, Q) \ge (1 + \delta) p^{\mathsf{e}(H)}$. In particular, we consider two possibilities: planting a small clique or hub. The clique case is nearly identical to the undirected case. However, when planting a hub, we need to consider \emph{directed} hubs in order to compensate for the asymmetry in digraphs. To this end, we define the following function $\bar{f}_H$.

\begin{align*}
    \bar{f}_H(x_1 , x_2 , y_1 , y_2) &:= \sum_{S \in \mathcal{S}_H} x_1^{\mathsf{v}^+ (S)}
    x_2^{\mathsf{v}^- (S)} 
    (x_1 \wedge x_2)^{\mathsf{v}^{\pm} (S)}
    y_1^{\mathsf{n}^{+, 0} (S)}
    y_2^{\mathsf{n}^{-, 0} (S)}
    (y_1 \wedge y_2)^{\mathsf{n}^{\pm} (S)}
\end{align*}

\begin{proposition}\label{prop:upper-bd}
    \quad
    \begin{enumerate}[(a)]
        \item Let $H$ be a connected, $\Delta$-regular digraph and $\delta > 0$. If $n^{-2/\Delta} \ll p \ll 1$, then
        \[
        \frac{\Phi_{n, p}(H, \delta)}{n^2 p^\Delta \log (1/p)} \le (\delta^{2 / \mathsf{v}(H)} + o(1)).
        \]
        \item Let $H$ be a digraph with maximum degree $\Delta$ and $\delta > 0$. If $n^{-1/\Delta} \ll p \ll 1$, then
        \[
        \frac{\Phi_{n, p}(H, \delta)}{n^2 p^\Delta \log(1/p)} \le \inf_{\substack{0 \le x_1 , x_2 \\ 0 \le y_1 , y_2 \le 1}} \{ x_1 y_1 + x_2 y_2 : \bar{f}_H(x_1 , x_2 , y_1 , y_2) = 1 + \delta\} + o(1).
        \]
        Further, the infimum of the right-hand side is obtained when $y_1 \vee y_2 = 1$.
    \end{enumerate}
\end{proposition}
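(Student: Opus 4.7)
The plan is to produce, for each part, an explicit ``planted'' configuration $Q \in \mathcal{Q}_n$ satisfying the homomorphism constraint and to directly estimate $I_p(Q)$. Part~(a) uses a \emph{bidirected clique plant}, and part~(b) a \emph{two-sided directed hub plant} whose four parameters mirror those of $\bar{f}_H$. Both are directed analogs of constructions in the undirected setting.

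For part~(a), fix $\varepsilon$ slightly above $\delta^{1/\mathsf{v}(H)}$, pick $K \subseteq [n]$ with $|K| = k := \lceil \varepsilon n p^{\Delta/2} \rceil$, and set $Q(i,j) = 1$ for distinct $i,j \in K$ and $Q(i,j) = p$ otherwise. Using $\mathsf{e}(H) = \Delta \mathsf{v}(H)/2$ (from $\Delta$-regularity), maps $\phi : \mathsf{V}(H) \to K$ contribute $(k/n)^{\mathsf{v}(H)} = \varepsilon^{\mathsf{v}(H)} p^{\mathsf{e}(H)} \ge (\delta+o(1))\, p^{\mathsf{e}(H)}$, while maps using $j < \mathsf{v}(H)$ vertices in $K$ pick up an extra factor of order $p^{\Delta(\mathsf{v}(H) - j)/2}$ and are negligible beside the baseline $p^{\mathsf{e}(H)}$ from $\phi$ avoiding $K$. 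The hypothesis $p \gg n^{-2/\Delta}$ forces $k \to \infty$, so $I_p(Q) = k(k-1)\log(1/p) = (1 + o(1))\varepsilon^2 n^2 p^\Delta \log(1/p)$; sending $\varepsilon \downarrow \delta^{1/\mathsf{v}(H)}$ yields the claim.

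For part~(b), fix feasible $(x_1, x_2, y_1, y_2)$ with (WLOG) $x_1 \le x_2$ and $y_1 \le y_2$. Choose nested sets $U_1 \subseteq U_2 \subseteq [n]$ with $|U_i| = x_i n p^\Delta$ and $W_1 \subseteq W_2 \subseteq [n]$ with $|W_i| = y_i n$, so the overlaps are $(x_1 \wedge x_2)n p^\Delta$ and $(y_1 \wedge y_2) n$; define $Q(i,j) = 1$ when $(i,j) \in (U_1 \times W_1) \cup (W_2 \times U_2)$ and $i \ne j$, and $Q(i,j) = p$ otherwise. The hypothesis $p \gg n^{-1/\Delta}$ gives $|U_i| \to \infty$, and $I_p(Q) = (|U_1||W_1| + |U_2||W_2|)\log(1/p)(1+o(1)) = (1+o(1))(x_1 y_1 + x_2 y_2)\, n^2 p^\Delta \log(1/p)$. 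For $t(H, Q)$, I stratify $\phi: \mathsf{V}(H) \to [n]$ by the hub-mapped set $S := \phi^{-1}(U_1 \cup U_2)$ and by how $S$ routes across $U_1 \setminus U_2,\, U_2 \setminus U_1,\, U_1 \cap U_2$ and how $\mathsf{N}_H(S)$ routes across $W_1,\, W_2,\, W_1 \cap W_2$. Hub vertices carry density $x_1 p^\Delta,\, x_2 p^\Delta,\, (x_1 \wedge x_2) p^\Delta$ respectively; neighbors carry $y_1,\, y_2,\, y_1 \wedge y_2$; and each $S$-incident edge receives weight $1$ exactly when its orientation matches the $U$-type of its endpoint in $S$ and the $W$-type of the other, and $p$ otherwise. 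A short power-counting check shows the surviving ($p \to 0$) contributions come precisely from $S \in \mathcal{S}_H$ with each $v \in S$ routed to the hub dictated by its role in $\mathsf{V}^+(H), \mathsf{V}^-(H), \mathsf{V}^\pm(H)$ and each neighbor routed so that every $S$-incident edge carries weight $1$; summing reproduces $\bar{f}_H(x_1, x_2, y_1, y_2)\, p^{\mathsf{e}(H)} = (1 + \delta)\, p^{\mathsf{e}(H)}$.

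For the claim that the infimum is attained when $y_1 \vee y_2 = 1$, start from any feasible $(x_1, x_2, y_1, y_2)$ with $t := y_1 \vee y_2 < 1$ and replace $(x_i, y_i)$ by $(tx_i, y_i/t)$. The objective $x_1 y_1 + x_2 y_2$ is preserved, while the $S$-summand of $\bar{f}_H$ is multiplied by $t^{|S| - |\mathsf{N}(S)|}$. Since $S$ is independent in $H^*$, its $|S|$ vertices each contribute $\Delta$ $H$-edges into $\mathsf{N}(S)$, while each vertex of $\mathsf{N}(S)$ absorbs at most $\Delta$ such edges, giving $|\mathsf{N}(S)| \ge |S|$ for every nonempty $S \in \mathcal{S}_H$; hence $\bar{f}_H$ does not decrease, and shrinking $x_1, x_2$ to restore $\bar{f}_H = 1 + \delta$ does not increase the objective. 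The main obstacle I anticipate is the careful bookkeeping in the part~(b) computation of $t(H, Q)$: isolating precisely which routings of $\phi$ survive at leading order in $p$ and matching their combinatorial count to the summands of $\bar{f}_H$, especially near the boundary cases where $S$ is not independent in $H^*$ or contains a vertex of degree $<\Delta$.
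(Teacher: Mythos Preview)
Your plan matches the paper's approach closely: a bidirected clique plant for (a), a two-sided directed hub for (b), and the same scaling argument for the claim $y_1\vee y_2=1$. The power-counting sketch and the $|\mathsf N(S)|\ge|S|$ argument are both correct and essentially what the paper does.

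There is one genuine issue. The reduction ``WLOG $x_1\le x_2$ and $y_1\le y_2$'' is not valid: there is no symmetry of the problem exchanging the indices $1\leftrightarrow 2$, since that swap sends $\bar f_H$ to $\bar f_{H^{\mathrm{rev}}}$, which need not equal $\bar f_H$. Restricting to this corner may therefore miss the infimum (for instance a minimizer with $x_1<x_2$ but $y_1>y_2$), and you would only have bounded $\Phi_{n,p}$ by an infimum over a proper subset of feasible points. The fix is immediate and is precisely what the paper does: drop the nesting assumption and instead take $A_1,A_2\subseteq[n]$ with $|A_i|\sim x_ip^{\Delta}n$ and $|A_1\cap A_2|\sim (x_1\wedge x_2)p^{\Delta}n$, and $B_1,B_2\supseteq A_1\cup A_2$ with $|B_i|\sim y_in$ and $|B_1\cap B_2|\sim(y_1\wedge y_2)n$, then set $Q(i,j)=1$ on $(A_1\times B_1)\cup(B_2\times A_2)$. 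This covers all four orderings at once, and your stratify-by-$S$ computation of $t(H,Q)$ then reproduces $\bar f_H(x_1,x_2,y_1,y_2)\,p^{\mathsf e(H)}$ exactly as in the paper via the binomial theorem.
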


\begin{proof}

The first claim almost identical to the undirected case proven in \cite{BGLZ2017},  except for the fact that $I_p(Q) \sim \delta^{2/\mathsf{v}(H)} n^2 p^\Delta \log(1/p)$ instead of $\frac{1}{2}\delta^{2/\mathsf{v}(H)} n^2 p^\Delta \log(1/p)$ due the edges existing in both directions.

Now we prove part (b). We first show that the infimum is obtained when $y_1 \vee y_2 = 1$. To see this, note that the sum of degrees for the $y_i$ terms in $\bar{f}_H$ are always at least $|S|$. As such, linearly increasing $y_1$ and $y_2$ while keeping $x_1 y_1 + x_2 y_2$ constant will always increase the value of $\bar{f}_H$. Therefore, it is always optimal to choose $y_1 \vee y_2 = 1$. 

Now assume $y_1 \vee y_2 = 1$ and let $x_3 = x_1 \wedge x_2$, $y_3 = y_1 \wedge y_2$. Choose $A_1 , A_2 \subseteq [n]$ such that $|A_1| \sim x_1 p^{\Delta} n$, $|A_2| \sim x_2 p^{\Delta} n$, and $|A_1 \cap A_2| \sim x_3  p^\Delta n$. Further, choose $B_1 , B_2$ containing $A_1 \cup A_2$ such that $|B_1| \sim y_1 n$, $|B_2| \sim y_2 n$, and $|B_1 \cap B_2| \sim y_3 n$. Define $A_3 = A_1 \cap A_2$, $B_3 = B_1 \cap B_2$. Now consider a matrix $Q \in \mathcal{Q}_n$ such that
\[
Q(i, j) = 
\begin{cases}
1 & (i \in A_1 \text{ and } j \in B_1) \text{ or } (j \in A_2 \text{ and } i \in B_2) \\
p & \text{otherwise}.
\end{cases}
\]
For a visualization of this digraph, see Figure~\ref{fig:graphon}. This gives $I_p(Q) \sim (|A_1||B_1| + |A_2||B_2|) I_p(1) \sim (x_1 y_1 + x_2 y_2) n^2 p^{\Delta} \log(1/p)$. Hence, it only remains to show $t(H, Q) \sim (1 + \delta) p^{\mathsf{e}(H)}$, which is done in the following argument.

\begin{align*}
t(H, Q) &\sim \sum_{\substack{X_1 , X_2 , X_3 , Y_1 , Y_2 , Y_3 \\ \text{: partition of } \mathsf{V}(H)}} \left( \frac{|A_1 \setminus A_3|}{n} \right)^{|X_1|}\left( \frac{|A_2 \setminus A_3|}{n} \right)^{|X_2|}\left( \frac{|A_3|}{n} \right)^{|X_3|} \\
& \left( \frac{|B_1 \setminus B_3|}{n} \right)^{|Y_1|}\left( \frac{|B_1 \setminus B_3|}{n} \right)^{|Y_2|}\left( \frac{|B_3|}{n} \right)^{|Y_3|} p^{\mathsf{e}(H) - \mathsf{e} (X_1 \cup X_3 , Y_1 \cup Y_3) - \mathsf{e}^(Y_2 \cup Y_3 , X_2 \cup X_3)} \\
&\sim \sum_{\substack{X_1 , X_2 , X_3 , Y_1 , Y_2 , Y_3 \\ \text{: partition of } \mathsf{V}(H)}} (x_1 - x_3)^{|X_1|} (x_2 - x_3)^{|X_2|} x_3^{|X_3|} (y_1 - y_3)^{|Y_1|} (y_2 - y_3)^{|Y_2|} y_3^{|Y_3|} \\
&p^{\mathsf{e}(H)} p^{\Delta(|X_1| + |X_2| + |X_3|) - \mathsf{e} (X_1 \cup X_3 , Y_1 \cup Y_3) - \mathsf{e}^(Y_2 \cup Y_3 , X_2 \cup X_3)} \\
&\sim \sum_{ \tiny \substack{S \in \mathcal{S}_H \\ X_1 \subseteq \mathsf{V}^{+}(S) \\ X_2 \subseteq \mathsf{V}^{-}(S) \\ X_3 = S \setminus(X_1 \cup X_2) \\ Y_1 \subseteq \mathsf{N}^{+, 0}(S) \\ Y_2 \subseteq \mathsf{N}^{-, 0}(S) \\ Y_3 = (\mathsf{V}(H) \setminus S) \setminus (Y_1 \cup Y_2) }} (x_1 - x_3)^{|X_1|} (x_2 - x_3)^{|X_2|} x_3^{|X_3|} (y_1 - y_3)^{|Y_1|} (y_2 - y_3)^{|Y_2|} y_3^{|Y_3|} p^{\mathsf{e}(H)} \\
&= \sum_{ \tiny \substack{S \in \mathcal{S}_H \\ X_1 \subseteq \mathsf{V}^{+}(S) \\ X_2 \subseteq \mathsf{V}^{-}(S) \\ Y_1 \subseteq \mathsf{N}^{+, 0}(S) \\ Y_2 \subseteq \mathsf{N}^{-, 0}(S)}} (x_1 - x_3)^{|X_1|} (x_2 - x_3)^{|X_2|} x_3^{|S| - |X_1| - |X_2|}  (y_1 - y_3)^{|Y_1|} (y_2 - y_3)^{|Y_2|} y_3^{|\mathsf{V}(H) \setminus S| - |Y_1| - |Y_2|} p^{\mathsf{e}(H)} \\
&= \sum_{S \in \mathcal{S}_H} x_1^{\mathsf{v}^+ (S)}
x_2^{\mathsf{v}^- (S)} 
x_3^{\mathsf{v}^{\pm} (S)}
y_1^{\mathsf{n}^{+, 0} (S)}
y_2^{\mathsf{n}^{-, 0} (S)}
y_3^{\mathsf{n}^{\pm} (S)} p^{\mathsf{e}(H)} \\
&= \bar{f}_H(x_1 , x_2 , y_1 , y_2) p^{\mathsf{e}(H)} \\
&= (1 + \delta) p^{\mathsf{e}(H)}
\end{align*}

The third approximation is due to the fact that $\Delta(|X_1| + |X_2| + |X_3|) \ge \mathsf{e}(X_1 \cup X_3, Y_1 \cup Y_3) + \mathsf{e}(Y_2 + Y_3 , X_2 \cup X_3)$ with equality only under the conditions of the third summation. Indeed, the left-hand side is at least  the sum of degrees of $X_1 \cup X_2 \cup X_3$, while the right-hand side is at most the sum of out-degrees of $X_1 \cup X_3$ and the in-degrees of $X_2 \cup X_3$. Hence it is clear that the equality only holds when $X_1 \cap X_2 \cap X_3$ is an independent set of $H^*$, $X_1$ and $Y_2$ have no in-degrees, and $X_2$ and $Y_1$ have no out-degrees. The fifth line is a consequence of the binomial theorem along with the fact that $|S| = \mathsf{v}^+(S) + \mathsf{v}^+(S) + \mathsf{v}^{\pm}(S)$ and similarly for $\mathsf{V}(H) \setminus S$.

\end{proof}

\begin{proof}[Proof of Theorem~\ref{thm:upper-bd}]
    
Since $A_S + B_S \ge |\mathsf{N}(S)| \ge |S|$, we can see that the infimum is obtained when $y_1 \vee y_2 = 1$ for similar reasons as above. Now note that
\[
    \bar{f}_H(x_1 , x_2 , y_1, 1) = \sum_{S \in \mathcal{S}_H} x_1^{\mathsf{v}^+ (S)}
    x_2^{\mathsf{v}^- (S)} 
    (x_1 \wedge x_2)^{\mathsf{v}^{\pm} (S)}
    y_1^{\mathsf{n}^{+, 0} (S) + \mathsf{n}^{\pm} (S)} = f_H(x_1 , x_2 , y_1 , 1)
\]
and similarly for $\bar{f}_H(x_1 , x_2 , 1, y_2)$. Therefore, Proposition~\ref{prop:upper-bd} implies Theorem~\ref{thm:upper-bd}.
    
\end{proof}

\section{Proof of Theorem~\ref{thm:lower-bd}}\label{sec:lower-bd}

In this section, we assume $H$ is a connected digraph with maximum degree $\Delta \ge 2$.
\subsection{Graphon variational problem}

In order to show the lower bound, we use a continuous analog of digraphs called \emph{directed graphons}. A directed graphon is defined to be a measurable function $W : [0, 1]^2 \to [0, 1]$. Unlike regular graphons, we do not require symmetry. This definition has a natural connection with the matrices $\mathcal{Q}_n$, which can be interpreted as discrete approximations of directed graphons. We write $\mathbb{E} [f(W)] = \int_{[0, 1]^2} f(W(x, y)) dx dy$. From now on, we will discuss the following continuous version of the variational problem.

For $\delta > 0$ and $0 < p < 1$, let
$$
\Phi_p(H, \delta) := \inf \left\{ \mathbb{E}[I_p(W)] : W \text{ is a directed graphon with } t(H, W) \ge (1 + \delta)p^{\mathsf{e}(H)} \right\}
$$
where
$$
t(H, W) := \int_{[0,1]^{\mathsf{v}(H)}} \prod_{(i, j) \in \mathsf{E}(H)} W(x_i, x_j) dx_1 , \dots , dx_{\mathsf{v}(H)}.
$$
The following lemma shows that finding a lower bound for $\Phi_p(H, \delta)$ gives a lower bound for $\Phi_{n, p}(H, \delta)$.

\begin{lemma}
    For any $H, p, n, \delta$, we have $\Phi_{p}(H, \delta) \le n^{-2} \Phi_{n, p}(H, \delta)$.
\end{lemma}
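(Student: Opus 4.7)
The plan is to exhibit, from any admissible discrete $Q \in \mathcal{Q}_n$, an admissible directed graphon $W$ whose entropy $\mathbb{E}[I_p(W)]$ is exactly $n^{-2} I_p(Q)$; taking the infimum over $Q$ then yields the claim. The construction is the usual step-function (``pixelation'') blow-up, with one subtlety about the diagonal blocks.

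Concretely, I would fix $Q \in \mathcal{Q}_n$ with $t(H,Q) \ge (1+\delta)p^{\mathsf{e}(H)}$, partition $[0,1] = I_1 \sqcup \cdots \sqcup I_n$ into intervals of length $1/n$, and define
\[
    W(x,y) := \begin{cases} Q(i,j) & \text{if } x \in I_i,\ y \in I_j,\ i \ne j,\\ p & \text{if } x,y \in I_i \text{ for some } i.\end{cases}
\]
Setting $W \equiv p$ on the diagonal blocks is the key point: since $Q(i,i)=0$ we cannot simply copy $Q$ there without paying an extra $n^{-1}I_p(0)$ in entropy, which would spoil the inequality. Using the value $p$ instead makes $I_p(W)$ vanish on the diagonal.

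Next I would check the two conditions. For the entropy, since each block $I_i \times I_j$ has area $n^{-2}$ and $W$ is constant on it,
\[
    \mathbb{E}[I_p(W)] = \frac{1}{n^2} \sum_{i \ne j} I_p(Q(i,j)) + \frac{1}{n^2} \sum_{i} I_p(p) = \frac{1}{n^2} I_p(Q),
\]
because $I_p(p) = 0$. For the homomorphism density, expanding $t(H,W)$ by partitioning the integration region according to which block each coordinate lies in yields
\[
    t(H,W) = \frac{1}{n^{\mathsf{v}(H)}} \sum_{\phi: \mathsf{V}(H) \to [n]} \prod_{(u,v) \in \mathsf{E}(H)} \widetilde{Q}(\phi(u),\phi(v)),
\]
where $\widetilde{Q}(i,j) = Q(i,j)$ for $i \ne j$ and $\widetilde{Q}(i,i) = p$. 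Comparing with $t(H,Q) = n^{-\mathsf{v}(H)}\sum_\phi \prod Q(\phi(u),\phi(v))$, the two sums agree on injective $\phi$, while on non-injective $\phi$ the $Q$-sum contributes $0$ (since $Q(i,i)=0$) and the $\widetilde{Q}$-sum contributes a nonnegative term. Hence $t(H,W) \ge t(H,Q) \ge (1+\delta)p^{\mathsf{e}(H)}$, so $W$ is admissible for the graphon problem.

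Combining, $\Phi_p(H,\delta) \le \mathbb{E}[I_p(W)] = n^{-2} I_p(Q)$, and taking the infimum over all admissible $Q$ gives $\Phi_p(H,\delta) \le n^{-2} \Phi_{n,p}(H,\delta)$. There is no real obstacle here beyond the bookkeeping on the diagonal strips; the only thing to watch is precisely that one must overwrite $Q$ by $p$ on the diagonal to avoid an $O(1/n)$ overhead in the entropy, which the blow-up with $Q(i,i) = 0$ would otherwise introduce.
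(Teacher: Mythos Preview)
Your argument is correct and is exactly the standard step-function embedding that the paper invokes by deferring to \cite{BGLZ2017}. One minor wording fix: the case split should be on whether $\phi$ collapses some \emph{edge} of $H$ to a loop rather than on injectivity of $\phi$ (a non-injective $\phi$ can still yield a nonzero $Q$-product), but since $\widetilde{Q} \ge Q$ entrywise the termwise inequality $\prod_{(u,v)} \widetilde{Q}(\phi(u),\phi(v)) \ge \prod_{(u,v)} Q(\phi(u),\phi(v))$ holds for every $\phi$ and your conclusion $t(H,W) \ge t(H,Q)$ is unaffected.
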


\begin{proof}
    The proof is straightforward and also done in \cite{BGLZ2017}.
\end{proof}

The graphon analogue of the solution candidates for given in the previous section are depicted in Figure~\ref{fig:graphon}.

\tikzset{every picture/.style={line width=0.75pt}} 

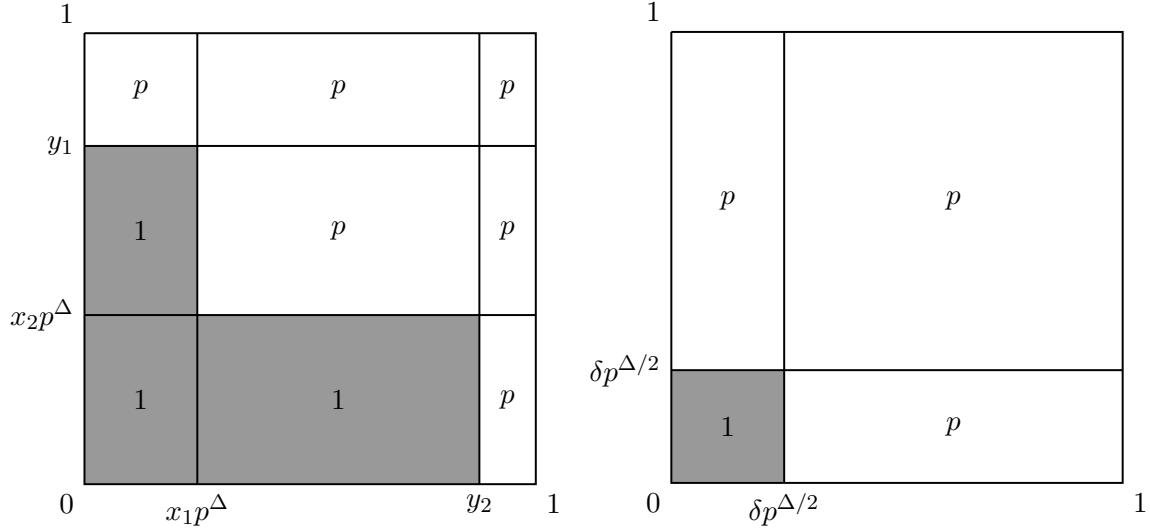
\begin{figure}
    \centering
    \label{fig:graphon}
    \begin{tikzpicture}[xscale=1.5,yscale=1.5]
        \fill[gray!80] (0,0) -- (0,3) -- (1,3) -- (1,0);
        \fill[gray!80] (0,0) -- (0,1.5) -- (3.5,1.5) -- (3.5,0);
        
        \draw (0,4) -- (4,4) -- (4,0) node[anchor=north west]{$1$}-- (0,0) node[anchor=north east]{$0$} -- (0,4) node[anchor=south east]{$1$};
        \draw (1,4) -- (1,0) node[anchor=north]{$x_1 p^\Delta$};
        \draw (4,1.5) -- (0,1.5) node[anchor=east]{$x_2 p^\Delta$};
        \draw (3.5,4) -- (3.5,0) node[anchor=north]{$y_2$};
        \draw (4,3) -- (0,3) node[anchor=east]{$y_1$};

        \node at (0.5,0.75) {$1$};
        \node at (2.25,0.75) {$1$};
        \node at (0.5,2.25) {$1$};
        \node at (2.25,2.25) {$p$};
        \node at (3.75,2.25) {$p$};
        \node at (2.25,3.5) {$p$};
        \node at (3.75,3.5) {$p$};
        \node at (3.75,0.75) {$p$};
        \node at (0.5,3.5) {$p$};
        \end{tikzpicture}
        \begin{tikzpicture}[xscale=1.5,yscale=1.5]
        
        \fill[gray!80] (0,0) -- (0,1) -- (1,1) -- (1,0);

        \draw (0,4) -- (4,4) -- (4,0) node[anchor=north west]{$1$}-- (0,0) node[anchor=north east]{$0$} -- (0,4) node[anchor=south east]{$1$};
        \draw (1,4) -- (1,0) node[anchor=north]{$\delta p^{\Delta/2}$};
        \draw (4,1) -- (0,1) node[anchor=east]{$\delta p^{\Delta/2}$};

        \node at (0.5,0.5) {$1$};
        \node at (0.5,2.5) {$p$};
        \node at (2.5,0.5) {$p$};
        \node at (2.5,2.5) {$p$};
        \end{tikzpicture}
\caption{Candidates for the graphon variational problem}
\end{figure}

\subsection{Decomposition of $t(H, W)$}

Suppose $W$ is a directed graphon satisfying $t(H, W) \ge (1 + \delta) p^{\mathsf{e}(H)}$. Often, we will use the shorthand
$$
W(\boldsymbol{x} | H) := \prod_{(i, j) \in \mathsf{E}(H)} W(x_i, x_j)
$$
where $\boldsymbol{x} = (x_v)_{v \in \mathsf{V}(H)}$. Now note that $I_p$ has minimum value at $p$, so we may assume $W \ge p$. Letting $U := W - p$, this implies $0 \le U \le 1-p$. We can expand $t(H, W) = t(H, p + U)$ as
$$
t(H, W) - p^{\mathsf{e}(H)} = \sum_{F} t(F, U) p^{\mathsf{e}(H) - \mathsf{e}(F)},
$$
where the sum is taken over all non-empty subgraphs $F$ of $H$. The following lemma shows that the only non-negligible contributions occur when $F$ is $\Delta$-regular or $F \in \mathcal{F}_H$, where
$$
\mathcal{F}_H := \{ F : F \text{ is a non-empty subgraph of $H$ with } \tau(F) = \mathsf{e}(F)/\Delta \}.
$$
Here, $\tau(F)$ is the minimum size of a vertex cover of $F$.

\begin{lemma}[{\cite[Corollary~6.2]{BGLZ2017}}]
    Let $H$ be a connected digraph with maximum degree $\Delta \ge 2$, and $F$ a non-empty subgraph of $H$. If $U$ is a directed graphon satisfying $\mathbb{E}[I_p(p + U)] \lesssim p^{\Delta} I_p(1)$, then $t(F, U) = o(p^{\mathsf{e}(H)})$ unless $F \in \mathcal{F}_H$ or $F$ is $\Delta$-regular (i.e. $F = H$ and $H$ is $\Delta$-regular). In other words,
$$
t(H, W) - p^{\mathsf{e}(H)} = \sum_{F \in \mathcal{F}_H \cup \{ H \}} t(F, U) p^{\mathsf{e}(H) - \mathsf{e}(F)} + o(p^{\mathsf{e}(H)}).
$$
If $H$ is irregular, the summation may be taken over just $\mathcal{F}_H$.
\end{lemma}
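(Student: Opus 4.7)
The proof plan is to follow the scheme of \cite[Corollary~6.2]{BGLZ2017} with only notational changes for the directed setting---each ordered edge $(u, v)$ contributes the factor $U(x_u, x_v)$, and neither the relevant H\"older-type inequality nor the entropy estimates depend on the symmetry of $W$. Two ingredients drive the argument: (i) moment bounds on $U$ derived from the entropy hypothesis, and (ii) a H\"older bound on $t(F, U)$ governed by the vertex-cover number $\tau(F)$.

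For (i), I would use the two-regime lower bound $I_p(p + u) \gtrsim \min(u^2/p,\, u \log(1/p))$ valid on $u \in [0, 1-p]$. Threshold-splitting $U$ at $U \lessgtr p\log(1/p)$ and using the hypothesis $\mathbb{E}[I_p(p+U)] \lesssim p^\Delta \log(1/p)$ gives $\int U^q \lesssim p^\Delta (\log(1/p))^{O(1)}$ for every real $q \ge 2$. For (ii), fix a minimum vertex cover $C \subseteq \mathsf{V}(F)$ with $|C| = \tau(F)$. Integrating out the independent set $\mathsf{V}(F) \setminus C$ first by applying H\"older with exponent $d_F(v)$ in each non-cover variable $v$, and then applying a second round of H\"older over the cover variables, yields
\[
    t(F, U) \;\lesssim\; \prod_{c \in C} \|U\|_{d_F(c)}^{d_F(c)} \;\lesssim\; p^{\Delta \tau(F)} (\log(1/p))^{O(1)},
\]
with minor variants handling cover vertices of degree one (which only improve the exponent). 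Crucially, this H\"older manipulation treats each factor $U(x_u, x_v)$ abstractly, so the directed structure plays no role.

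Since every vertex of $F$ covers at most $\Delta$ edges, $\Delta \tau(F) \ge \mathsf{e}(F)$, with equality iff $F \in \mathcal{F}_H$. For $F \notin \mathcal{F}_H$, strict inequality combined with the integrality of both $\Delta \tau(F)$ and $\mathsf{e}(F)$ gives $\Delta \tau(F) \ge \mathsf{e}(F) + 1$, whence $t(F, U) \lesssim p^{\mathsf{e}(F)+1}(\log(1/p))^{O(1)} = o(p^{\mathsf{e}(F)})$; multiplying by $p^{\mathsf{e}(H) - \mathsf{e}(F)}$ yields the claimed $o(p^{\mathsf{e}(H)})$. The $F = H$ case with $H$ $\Delta$-regular escapes the dichotomy---the clique-planting construction of Section~\ref{sec:upper-bd} shows $t(H, U)$ can genuinely be of order $p^{\mathsf{e}(H)}$---which is why $H$ itself is retained in the sum in that case.

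The main obstacle is the exponent bookkeeping in the H\"older step, especially when $C$ contains edges internal to itself or when some cover vertex has degree one, so that the clean bound $\prod_c \|U\|_{d_F(c)}^{d_F(c)}$ requires careful assignment of each edge to a single covering vertex. In \cite{BGLZ2017} this is handled via an accounting of fractional vertex covers, and the same bookkeeping carries over to the directed setting without modification; in/out orientations simply appear as labels on the edge factors $U(x_u, x_v)$ and do not alter the underlying $L^q$-norm inequalities.
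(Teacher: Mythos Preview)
Your overall approach matches the paper's: both observe that the argument of \cite[Corollary~6.2]{BGLZ2017} makes no use of the symmetry of $W$ and therefore transfers verbatim to directed graphons. That high-level claim is correct and is all the paper itself asserts.

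However, the specific H\"older bound you display is not right, and the inconsistency surfaces exactly at the $\Delta$-regular case you say ``escapes.'' The expression $\prod_{c\in C}\|U\|_{d_F(c)}^{d_F(c)}\lesssim p^{\Delta\tau(F)}$ double-counts every edge internal to the cover $C$; for non-bipartite $F$ it is simply false. Take $H=F$ an odd cycle with $\Delta=2$: then $\tau(F)=\lceil\mathsf{v}(F)/2\rceil>\mathsf{e}(F)/\Delta$, so your chain of inequalities would force $t(F,U)=o(p^{\mathsf{e}(F)})$, yet the clique graphon $U=\mathbf{1}_{[0,p]^2}$ satisfies the entropy hypothesis with $t(F,U)\asymp p^{\mathsf{e}(F)}$. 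Thus the $\Delta$-regular case does not ``escape'' your argument---it contradicts it. The quantity that actually governs the Finner/H\"older bound is the \emph{fractional} cover number $\tau^*(F)=\nu^*(F)$, and one has $\nu^*(F)=\mathsf{e}(F)/\Delta$ precisely when $F\in\mathcal{F}_H$ or $F$ is $\Delta$-regular (in the regular case the uniform matching $w\equiv 1/\Delta$ saturates every vertex). With $\tau$ replaced by $\tau^*$, both exceptional families fall out of a single dichotomy rather than one being appended ad hoc. Your final paragraph correctly flags the bookkeeping as the crux, but the resolution is not ``assign each edge to a single covering vertex''---it is to abandon integer covers altogether in favour of fractional ones, which is what \cite{BGLZ2017} does and what carries over unchanged to the directed setting.
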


\begin{proof}
    The proof is identical to the case of undirected graphs \cite[Corollary~6.2]{BGLZ2017}.
\end{proof}

The following are some basic properties of $\mathcal{F}_H$ listed in \cite{BGLZ2017}.

\begin{itemize}
    \item Every $F \in \mathcal{F}_H$ is bipartite and has maximum degree exactly $\Delta$.
    \item If $S$ is a minimum size vertex cover of $F$, then $S$ is an independent set of vertices, all of which have degree $\Delta$ in $F$. Further, $(S, \mathsf{V}(F) \setminus S)$ forms a vertex bipartition of $F$.
    \item Conversely, any non-empty independent set $S$ of $H^*$ uniquely determines an $F \in \mathcal{F}_H$, where $F$ is generated by the edges of $H$ incident to $S$.
    \item If $H$ is regular and bipartite, then the two parts of the vertex bipartition are the only minimum vertex covers of $H \in \mathcal{F}_H$. For all other $F \in \mathcal{F}_H$, there is only one minimum vertex cover.
    \item For a regular digraph $H$, $H \in \mathcal{F}_H$ if and only if $H$ is bipartite.
\end{itemize}

\subsection{Contributions from non-negligible terms}

We now proceed to  calculate the contributions given by non-negligible terms. To this end, we define the sets of points with high in/out degrees:
\begin{align*}
B_b^+ = B_b^+(U) := \{ x : d_U^+ (x) \ge b \}, \quad \text{where} \quad d^+(x) = d_U^+(x) := \int_0^1 U(x, y) dy \\
B_b^- = B_b^-(U) := \{ x : d_U^- (x) \ge b \}, \quad \text{where} \quad d^-(x) = d_U^-(x) := \int_0^1 U(y, x) dy.
\end{align*}

Further, let
$$
B_b := B_b^+ \cup B_b^-, \quad B_b^{\pm} := B_b^+ \cap B_b^-.
$$

\begin{lemma}\label{lem:contri}
    Let $\Delta \ge 2$ and $U$ be a directed graphon satisfying $\mathbb{E}[I_p(p + U)] \lesssim p^{\Delta} I_p(1)$.

    \begin{enumerate}[(a)]
        \item Let $F$ be a connected irregular bipartite digraph with maximum degree $\Delta$ and $\tau(F) = \mathsf{e}(F) / \Delta$. Let $S$ be the unique vertex cover of $F$ with size $\mathsf{e}(F) / \Delta$ and $T = \mathsf{V}(F) \setminus S$ so that $(S, T)$ forms a vertex bipartition of $F$. Then, for any $p^{1/3} \ll b \ll 1$,
        \begin{align*}
            t(F, U) &= \int U(\boldsymbol{x} | F) \boldsymbol{1}
            \left\{
            \begin{array}{ll}
            \forall v \in \mathsf{V}^+(S) \colon x_v \in B_b^+ \\
            \forall v \in \mathsf{V}^-(S) \colon x_v \in B_b^- \\
            \forall v \in \mathsf{V}^{\pm}(S) \colon x_v \in B_b^{\pm} \\
            \forall u \in T \colon x_u \in \overline B_b
            \end{array}
            \right\}
            d\boldsymbol{x} + o(p^{\mathsf{e}(F)}) \\
        \end{align*}
        \item Let $H$ be a connected $\Delta$-regular non-bipartite digraph. Then there exists some $\kappa = \kappa(H) > 0$ such that for any $p^{\kappa} \le b \ll 1$,
        $$
        t(H, U) = \int U(\boldsymbol{x}|H) \boldsymbol{1} \{ \forall v \in \mathsf{V}(H): x_v \in \bar{B}_b \} d \boldsymbol{x} + o(p^{\mathsf{e}(F)}).
        $$
        \item Let $H$ be a connected $\Delta$-regular bipartite digraph with vertex bipartition $(S, T)$. For any $b_0 = o(1)$, there exists some $b$ with $b_0 \le b \ll 1$ such that
        $$
        t(H, U) = \Gamma_1 + \Gamma_2 + \Gamma_3 + o(p^{\mathsf{e}(H)}),
        $$
        where
        \begin{align*}
        \Gamma_1 &= \int U(\boldsymbol{x} | H) \boldsymbol{1}
         \left\{
         \begin{array}{ll}
         \forall v \in \mathsf{V}^+(S) \colon x_v \in B_b^+ \\
         \forall v \in \mathsf{V}^-(S) \colon x_v \in B_b^- \\
         \forall v \in \mathsf{V}^{\pm}(S) \colon x_v \in B_b^{\pm} \\
         \forall u \in T \colon x_u \in \overline B_b
         \end{array}
         \right\}
         d\boldsymbol{x} \\
         \Gamma_2 &= \int U(\boldsymbol{x} | H) \boldsymbol{1}
         \left\{
         \begin{array}{ll}
         \forall v \in \mathsf{V}^+(T) \colon x_v \in B_b^+ \\
         \forall v \in \mathsf{V}^-(T) \colon x_v \in B_b^- \\
         \forall v \in \mathsf{V}^{\pm}(T) \colon x_v \in B_b^{\pm} \\
         \forall u \in S \colon x_u \in \overline B_b
         \end{array}
         \right\}
         d\boldsymbol{x} \\
         \Gamma_3 &= \int U(\boldsymbol{x}|H) \boldsymbol{1} \{ \forall v \in \mathsf{V}(H): x_v \in \bar{B}_b \} d \boldsymbol{x}.
         \end{align*}
        $\Gamma_1, \Gamma_2$ can also be decomposed further into sums of the form similar to (a).
    \end{enumerate}
\end{lemma}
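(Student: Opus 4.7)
My plan is to adapt the proof of \cite[Lemma~6.3]{BGLZ2017} to the directed setting. The only genuine novelty is refining the single ``high-degree'' set of the undirected case into the three directed variants $B_b^+$, $B_b^-$, $B_b^\pm$, and tracking which variant each cover vertex must occupy to match its in/out edge pattern in $F$.

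The first step is to establish the directed measure bound
\[
|B_b^+|,\ |B_b^-| \ \lesssim\ \frac{p^\Delta\log(1/p)}{I_p(b)}.
\]
This follows from Jensen's inequality: $I_p(d_U^\pm(x)) \le \int I_p(p + U(x,\cdot))$, so that $\int I_p(d_U^\pm) \le \mathbb{E}[I_p(p+U)] \lesssim p^\Delta\log(1/p)$, and restricting the domain of integration to $B_b^\pm$ where $d_U^\pm \ge b$. Because the entropy decomposes additively over edges, the in- and out-directions are controlled independently by the same argument as in the undirected case. The second step is to decompose the integrand $U(\boldsymbol{x}\mid F)$ according to a partition of $\mathsf{V}(F)$ into the four classes $B_b^+\setminus B_b^-$, $B_b^-\setminus B_b^+$, $B_b^\pm$, and $\bar{B}_b$. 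For each of the finitely many partitions, the contribution is bounded by combining (i) the measure bound above, paid once per vertex placed in a high-degree set, and (ii) the pointwise bound $d_U^+(x), d_U^-(x) < b$ on $\bar{B}_b$, which controls any edge incident to a low-degree vertex after integrating out one endpoint. The directed feature is that each edge $(u,v)$ must be integrated against either $d_U^+(x_u)$ or $d_U^-(x_v)$, consistently with the partition assignment at both endpoints.

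For part (a), since $F$ is irregular, every vertex in $T$ has degree strictly less than $\Delta$ in $F$, so placing a $T$-vertex in $B_b$ incurs a measure cost not offset by the degree gain. Conversely, a vertex $v \in \mathsf{V}^+(S)$ placed outside $B_b^+$ forces integration of its $\Delta$ out-edges through $d_U^+(x_v) < b$, yielding a factor $\lesssim b^\Delta$ which, combined with the measure of the relevant region, is $o(p^\Delta)$; the $\mathsf{V}^-(S)$ and $\mathsf{V}^\pm(S)$ cases are analogous. This isolates exactly the stated dominant configuration. For part (b), $H$ non-bipartite and $\Delta$-regular implies $\tau(H) > \mathsf{e}(H)/\Delta$, so any configuration placing a vertex in $B_b$ fails to balance the entropy cost against the edge gain; the existence of $\kappa(H) > 0$ such that the bound holds for all $b \ge p^\kappa$ follows along the same lines as in the undirected case. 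For part (c), $H$ has exactly two minimum covers $S$ and $T$, producing the symmetric terms $\Gamma_1, \Gamma_2$, plus $\Gamma_3$ for the all-low-degree regime; the freedom to choose $b$ in the range $[b_0, o(1))$ is used to avoid ambiguous configurations straddling the boundary between the $\Gamma_1$ and $\Gamma_2$ regimes, by picking $b$ at a height where the relevant level sets have negligible measure.

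The main obstacle will be the case analysis for partitions in which only some of the cover vertices are placed in the correct high-degree set: in the directed setting the type mismatch between $\mathsf{V}^+(S)$, $\mathsf{V}^-(S)$, $\mathsf{V}^\pm(S)$ and the sets $B_b^+, B_b^-, B_b^\pm$ produces a richer family of ``bad'' configurations than in the undirected case, and each must be shown to contribute $o(p^{\mathsf{e}(F)})$ via a uniform bookkeeping of in-degree deficit plus out-degree deficit against entropy cost. Once this is in place, the remaining estimates transfer essentially verbatim from \cite[Lemma~6.3]{BGLZ2017}.
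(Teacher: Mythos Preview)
Your proposal follows the same overall strategy as the paper: adapt the undirected argument of \cite{BGLZ2017} (the paper points to Proposition~6.5 there) by refining the single high-degree set $B_b$ into $B_b^+, B_b^-, B_b^\pm$ and sorting the cover vertices accordingly. You also correctly locate the new difficulty in the type mismatches between $\mathsf{V}^+(S), \mathsf{V}^-(S), \mathsf{V}^\pm(S)$ and the directed high-degree sets.

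However, there is a gap. Your claim that ``the remaining estimates transfer essentially verbatim'' is not accurate, and your proposed ``uniform bookkeeping of in-degree deficit plus out-degree deficit'' is not a workable substitute for the actual mechanism used in \cite{BGLZ2017}. The combinatorial engine there is the $2$-matching lemma \cite[Lemma~7.1]{BGLZ2017}: for each edge $e$ of $F$ it produces a $2$-matching of size $2\mathsf{e}(F)/\Delta$ containing $e$, and this $2$-matching is what routes the Finner-type integration that kills the bad configurations. The paper's proof singles out exactly one modification needed in the directed setting: a slight strengthening of that lemma, requiring that the connected component of $e$ in the $2$-matching be a \emph{path} (rather than possibly an even cycle). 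The paper then notes that the original proof of \cite[Lemma~7.1]{BGLZ2017} already yields this stronger conclusion. Your sketch neither invokes the $2$-matching machinery nor supplies an alternative device, so the case analysis you outline for the mismatched configurations cannot be completed as written; you have identified the obstacle but not the lever that removes it.
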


\begin{proof}
    The proof is almost identical to the proof of \cite[Proposition~6.5]{BGLZ2017}, with one minor modification. That is, we need a slightly stronger version of \cite[Lemma~7.1]{BGLZ2017}. This is stated the next lemma. However, the proof of \cite[Lemma~7.1]{BGLZ2017} already suffices to prove this stronger claim, and so we will omit the proof here.
\end{proof}

\begin{lemma}
    Let $F$ be a connected irregular bipartite digraph with maximum degree $\Delta$ and $\tau(F) = \mathsf{e}(F) / \Delta$. For every edge $e$ of $F$, there is a $2$ matching $M$ of $F$ of size $2\mathsf{e}(F) / \Delta$ containing $e$ such that the connected component of $e$ in $M$ is a path.
\end{lemma}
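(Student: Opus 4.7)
Let $S$ be the unique minimum vertex cover of $F$ and set $T := \mathsf{V}(F)\setminus S$; then $(S,T)$ is a bipartition of $F$, $|S| = \mathsf{e}(F)/\Delta$, every $s \in S$ satisfies $\deg_F(s) = \Delta$, and the identity $\Delta|S| = \mathsf{e}(F) = \sum_{t \in T}\deg_F(t) \le \Delta|T|$ together with irregularity of $F$ forces the strict inequality $|T| > |S|$. The goal is a 2-matching $M$ with $e \in M$, $\deg_M(s) = 2$ for all $s \in S$ (so $|M| = 2|S| = 2\mathsf{e}(F)/\Delta$), $\deg_M(t) \le 2$ for all $t \in T$, and the component of $e$ in $M$ a path.

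\emph{Stage 1 (existence with $e \in M$).} Write $e = (s^*, t^*)$ and consider the bipartite degree-constrained subgraph problem on $F - e$ with demands $\deg(s^*) = 1$, $\deg(s) = 2$ for $s \in S\setminus\{s^*\}$, and capacities $\deg(t^*) \le 1$, $\deg(t) \le 2$ for $t \in T\setminus\{t^*\}$. The Hall-type condition $|N_F(A)| \ge |A|$ for $A \subseteq S$ is immediate since all $\Delta|A|$ edges out of $A$ land in $N_F(A)$, giving $\Delta|A| \le \Delta|N_F(A)|$. The only potentially delicate inequality for the reduced problem arises at $A \subseteq S\setminus\{s^*\}$ with $t^* \in N_F(A)$ and $|N_F(A)| = |A|$; but such tightness forces every $t \in N_F(A)$ to have all $\Delta$ of its $F$-neighbors inside $A$, contradicting that $s^* \notin A$ is an $F$-neighbor of $t^* \in N_F(A)$. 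Hence the reduced problem is feasible and, adjoining $e$, the desired $M$ exists.

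\emph{Stage 2 (path component).} Among 2-matchings as above choose $M$ minimizing the number of cyclic components, and suppose the component $C$ of $e$ in $M$ is a cycle (necessarily of even length at least $4$). From $\sum_{t \in T}\deg_M(t) = 2|S| < 2|T|$ some $t^\circ \in T$ satisfies $\deg_M(t^\circ) \le 1$. I claim that for some $s_0 \in C \cap S$ there is an $M$-alternating walk $P = s_0\,t_1\,s_1\,t_2\cdots t_k = t^\circ$ in $F$ whose first edge is non-$M$ and whose edges alternate thereafter. Indeed, if no such walk existed for any $s_0 \in C \cap S$, then the set $R$ of vertices reachable from $C \cap S$ by such walks would omit $t^\circ$, every $t \in R \cap T$ would have $\deg_M(t) = 2$ with both $M$-neighbors in $R \cap S$, and $R$ would be closed under taking non-$M$ $F$-neighbors of $R \cap S$ and $M$-neighbors of $R \cap T$; a boundary edge count combined with connectedness of $F$ and $|T| > |S|$ then yields a contradiction. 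Picking a $C$-edge $(s_0, t^{(0)}) \in M$ distinct from $e$ (possible because $|C| \ge 4$), set
\[
    M' := \bigl(M \mathbin{\triangle} \mathsf{E}(P)\bigr) \setminus \{(s_0, t^{(0)})\}.
\]
A short degree count shows $|M'| = |M| = 2|S|$, $M'$ is a 2-matching with $\deg_{M'}(s) = 2$ for $s \in S$, and $e \in M'$. Deleting $(s_0, t^{(0)})$ opens the cycle $C$ into a path, while the toggle along the simple alternating walk $P$ augments without creating new cycles, so $M'$ has strictly fewer cyclic components than $M$, contradicting minimality.

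\emph{Main obstacle.} The subtle part is Stage 2: producing the alternating walk $P$ and confirming that the toggle-and-delete operation genuinely decreases the cyclic component count. Both rely on irregularity (to provide the deficient $t^\circ$) and connectedness of $F$ (to route the walk), and a short case analysis is needed to handle the possibility that $P$ reenters $C$ or crosses another cyclic component of $M$ before reaching $t^\circ$.
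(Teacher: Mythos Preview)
The paper does not give its own proof of this lemma: it simply remarks that the argument for \cite[Lemma~7.1]{BGLZ2017} already establishes this slightly stronger statement, and omits the details. So you are not competing with an argument written out in the present paper.

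Your two-stage plan is sound and, once the details are filled in, correct. A few remarks.

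In Stage~1 you only treat the tight case $A \subseteq S\setminus\{s^*\}$; the companion case $s^* \in A$, $|N_F(A)| = |A|$, with $t^*$'s sole $A$-neighbour being $s^*$ via $e$, also has to be ruled out, but the same double-count (now using $\Delta \ge 2$) disposes of it.

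In Stage~2 your ``boundary edge count'' is in fact valid, but it deserves to be spelled out rather than asserted. From the closure properties one gets $e_M(R_S,R_T)=2|R_T|$ and $e_{\overline M}(R_S,R_T)=(\Delta-2)|R_S|$; counting $F$-edges into $R_T$ then forces $|R_T|=|R_S|$, every $t\in R_T$ has full degree $\Delta$, and both $N_F(R_S)\subseteq R_T$ and $N_F(R_T)\subseteq R_S$. Connectedness of $F$ gives $R_S=S$, $R_T=T$, contradicting $|S|<|T|$. Note this step uses $\Delta\ge 3$; when $\Delta=2$ the lemma is trivial since then $M=E(F)$ and $F$ itself is a path by irregularity.

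Finally, the minimality device and the ``strictly fewer cyclic components'' claim are unnecessary, and you do not actually justify the latter. It is cleaner to argue directly: taking $P$ shortest (hence simple, meeting $C$ only at $s_0$, and in particular avoiding $e$), the edges of $C\setminus\{(s_0,t^{(0)})\}$ all lie in $M'$ and connect $e$ to $t^{(0)}$, which satisfies $\deg_{M'}(t^{(0)})=1$. Thus the $M'$-component of $e$ contains a degree-one vertex and is automatically a path. This replaces your final paragraph and sidesteps the case analysis you flag as the main obstacle.
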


\begin{remark}
    Many parts of the proofs in \cite{BGLZ2017} rely on combinatorial properties of graphs $F$ and $H$. We remark that none of these properties rely on the fact that these graphs are simple or undirected, and so they can be applied to our setting without alteration.
\end{remark}

\subsection{Upper bound on contributions}

For any $\epsilon > 0$, let $\Gamma_p^\epsilon (H, \delta)$ be the set of directed graphons satisfying
\begin{enumerate}[(a)]
    \item $t(H, W) \ge (1 + \delta) p^{\mathsf{e}(H)}$
    \item $W$ only takes values in $\{ p \} \cup [(1 + \epsilon)p, 1]$.
\end{enumerate}

Define
\[
    \Phi_p^\epsilon(H, \delta) := \inf \left\{ \mathbb{E}[I_p (W)] : W \in \Gamma_p^\epsilon(H, \delta) \right\}.
\]

\begin{lemma}[{\cite[Lemma~6.1]{BGLZ2017}}]
    \label{lem:phi-ep}
    \[
        \Phi_{p}^\epsilon(H, \delta) \le \Phi_{p}(H, (1 + \epsilon)^{\mathsf{e}(H)}(1 + \delta) - 1).
    \]
\end{lemma}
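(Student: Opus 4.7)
The plan is to transport a near-optimal graphon for the right-hand side to a feasible graphon for the left-hand side by a pointwise truncation, losing only a factor of $(1+\epsilon)^{\mathsf{e}(H)}$ in the density condition. Fix $\delta' := (1+\epsilon)^{\mathsf{e}(H)}(1+\delta) - 1$ and let $W$ be any graphon with $t(H,W) \ge (1+\delta')p^{\mathsf{e}(H)}$. Since $I_p$ is minimized at $p$ and $t(H,\cdot)$ is monotone in the weights, replacing $W$ by $\max(W,p)$ pointwise only decreases $\mathbb{E}[I_p(W)]$ and preserves the constraint, so I may assume $W \ge p$ everywhere.

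Next, I define the truncation
\[
W'(x,y) := \begin{cases} W(x,y) & \text{if } W(x,y) \ge (1+\epsilon)p, \\ p & \text{if } W(x,y) < (1+\epsilon)p. \end{cases}
\]
By construction $W'$ only takes values in $\{p\} \cup [(1+\epsilon)p, 1]$, so it satisfies condition (b) of $\Gamma_p^\epsilon$. On the set $\{W < (1+\epsilon)p\}$ one has $W'/W = p/W \ge 1/(1+\epsilon)$ (using $W \ge p$), and on the complement $W'/W = 1$. Therefore $W' \ge W/(1+\epsilon)$ pointwise, which gives the edge-product bound
\[
\prod_{(i,j) \in \mathsf{E}(H)} W'(x_i,x_j) \;\ge\; \frac{1}{(1+\epsilon)^{\mathsf{e}(H)}} \prod_{(i,j)\in \mathsf{E}(H)} W(x_i,x_j).
\]
Integrating in $\boldsymbol{x}$ and using the hypothesis on $t(H,W)$ yields
\[
t(H,W') \;\ge\; \frac{t(H,W)}{(1+\epsilon)^{\mathsf{e}(H)}} \;\ge\; \frac{(1+\delta')}{(1+\epsilon)^{\mathsf{e}(H)}} p^{\mathsf{e}(H)} \;=\; (1+\delta)p^{\mathsf{e}(H)},
\]
so $W' \in \Gamma_p^\epsilon(H,\delta)$.

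Finally, since $I_p(p)=0$ is the minimum value of $I_p$, wherever $W' = p$ we have $I_p(W') \le I_p(W)$, and wherever $W' = W$ we have equality. Hence $\mathbb{E}[I_p(W')] \le \mathbb{E}[I_p(W)]$. Taking the infimum over feasible $W$ for the right-hand side produces the claimed inequality. There is no real obstacle here, as every step reduces to a pointwise comparison; the only delicate point is checking that the pointwise bound $W' \ge W/(1+\epsilon)$ really uses the assumption $W \ge p$ (without which $p/W$ could be arbitrarily small).
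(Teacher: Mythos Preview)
Your proof is correct and follows exactly the standard truncation argument from \cite[Lemma~6.1]{BGLZ2017}, which the paper cites without reproducing: replace values of $W$ in $[p,(1+\epsilon)p)$ by $p$, use the pointwise bound $W'\ge W/(1+\epsilon)$ (valid once $W\ge p$) to control $t(H,\cdot)$, and use $I_p(p)=0$ to control the entropy. There is nothing to add.
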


The upshot of this lemma is that we may disregard directed graphons with values near $p$, other than $p$ itself. This technique was also used in \cite{LZ2021} and \cite{G2021}. Thus, we can use the following lemma to get good bounds on $\mathbb{E}[I_p(W)]$.

\begin{lemma}[{\cite[Lemma~4.3]{G2021}}]
    \label{lem:near-p}
    For all $c > 0$, there exists $\epsilon > 0$ such that if $p \to 0$ and $x = x(p)$ satisfies $|x| \ge p^{1 + \epsilon}$, then
    \[
        I_p(p + x) \ge (1 - o(1)) |x|^{1 + c} \log (1/p).
    \]
\end{lemma}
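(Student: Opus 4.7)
My plan is to choose $\epsilon = \epsilon(c) > 0$ sufficiently small and split the range of $x$ into three zones according to $|x|/p$, using a tailored lower bound on $I_p(p+x)$ in each. Recall that $I_p$ is convex, $I_p(p) = I_p'(p) = 0$, and $I_p''(y) = 1/(y(1-y))$, which yields the universal starting bound $I_p(p+x) \ge x^2/(2(p+x^+))$ for $x > -p$, where $x^+ := \max(x,0)$.

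In Zone~1 ($|x| \le p$) this gives $I_p(p+x) \ge x^2/(4p)$, so the target reduces to $|x|^{1-c} \gtrsim p\log(1/p)$. Using $|x| \ge p^{1+\epsilon}$ the left side is at least $p^{(1+\epsilon)(1-c)}$, which dominates provided $(1+\epsilon)(1-c) < 1$, i.e. $\epsilon < c/(1-c)$. Zone~2 ($p < x \le Kp$ for a large constant $K$) has $I_p(p+x) \gtrsim p$ while the target is $O(p^{1+c}\log(1/p))$, so the inequality is automatic.

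Zone~3 ($x \ge Kp$ with $x/p \to \infty$) is where the real work lies. Starting from $(1-y)\log((1-y)/(1-p)) \ge -(y-p)$ (valid for $y \in [p,1]$ via $\log(1+t) \le t$), I derive
\[
I_p(p+x) \ge (p+x)\log\!\left(\frac{p+x}{p}\right) - x = p\,\varphi(x/p), \qquad \varphi(u) := (1+u)\log(1+u) - u.
\]
Since $\varphi(u)/(u\log u) \to 1$ as $u \to \infty$, this yields $I_p(p+x) \ge (1-o(1))\,x\log(x/p)$. Parametrizing $x = p^{\alpha}$ with $\alpha \in [0,1)$, the remaining check becomes $1-\alpha \ge (1-o(1))\,p^{\alpha c}$.

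The main obstacle is the sub-regime of Zone~3 where $\alpha \to 1$, since both sides of the last inequality then tend to $0$. The saving observation is that belonging to Zone~3 forces $(1-\alpha)\log(1/p) \to \infty$, whence $\log(1/(1-\alpha)) \le \log\log(1/p) + O(1)$; on the other hand $\log(1/p^{\alpha c}) = \alpha c \log(1/p) \sim c\log(1/p)$, which is much larger than $\log\log(1/p)$, so the inequality holds with room to spare. The complementary sub-regimes of Zone~3 ($\alpha$ bounded away from~$1$, including $\alpha \to 0$) are immediate: either $p^{\alpha c} \to 0$ polynomially while $1-\alpha$ stays bounded, or for $\alpha \to 0$ the expansion $1 - p^{\alpha c} \approx \alpha c \log(1/p) \gg \alpha$ already gives the required excess.
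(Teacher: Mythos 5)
The paper does not actually prove this lemma; it cites it directly from \cite{G2021}, so there is no proof in the paper to compare against. Evaluating your argument on its own merits: the overall strategy is sound and is the natural one (decompose by the size of $|x|/p$, use the second-derivative bound near $p$, and the $\varphi$-bound $I_p(p+x) \ge p\bigl((1+u)\log(1+u)-u\bigr)$ with $u = x/p$ for larger $x$). Zone~1 correctly identifies the constraint $\epsilon < c/(1-c)$, Zone~2 is a trivial margin check, and the elementary inequality $(1-y)\log\frac{1-y}{1-p}\ge -(y-p)$ is valid (it amounts to $(1-t)\log(1-t)\ge -t$ for $t\in[0,1]$). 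You also correctly observe that negative $x$ is forced into Zone~1 since $p+x\ge 0$ gives $|x|\le p$.

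There is one imprecision worth flagging. You define Zone~3 as ``$x\ge Kp$ with $x/p\to\infty$,'' and then invoke $\varphi(u)/(u\log u)\to 1$ to write $I_p(p+x)\ge(1-o(1))x\log(x/p)$. But $x\ge Kp$ for a fixed constant $K$ does not by itself force $x/p\to\infty$; for $x/p$ bounded, $\varphi(u)/(u\log u)$ is bounded \emph{away from} $1$, so the displayed $(1-o(1))$ claim is false in that sub-range. This is harmless only because the regime $x/p = O(1)$ is already covered by the trivial Zone~2 estimate ($I_p(p+x)\gtrsim p$ while $x^{1+c}\log(1/p)\lesssim p^{1+c}\log(1/p)=o(p)$); the clean fix is either to take the Zone~2/3 cutoff $K=K(p)\to\infty$ slowly (e.g.\ $K=\log\log(1/p)$), so Zone~3 genuinely has $x/p\to\infty$, or to argue by passing to subsequences along which $x/p$ converges in $[K,\infty]$. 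Similarly, your discussion of ``$1-\alpha\ge(1-o(1))p^{\alpha c}$'' quietly uses that the prefactor $\varphi(u)/(u\log u)$ has already reached $1-o(1)$, which again needs $u\to\infty$; tracking the product $\bigl(\varphi(u)/(u\log u)\bigr)\cdot(1-\alpha)p^{-\alpha c}$ directly makes the argument airtight in every sub-case, including $\alpha\to0$ (where the ratio tends to $1$) and $\alpha\to1$ (where, as you note, it diverges). With that bookkeeping made explicit, the proof is correct.
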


From now on, fix $\epsilon$ such that $\epsilon = o(1)$ and $\epsilon = p^{o(1)}$ as $p \to 0$ ($\epsilon = 1/\log(1/p)$ is one example). By Lemma~\ref{lem:phi-ep}, we can choose $W = W(p) \in \Gamma_p^{\epsilon}(H, \delta)$ such that $\mathbb{E}[I_p(W)] \le (1 + o(1)) \Phi_p (H, \delta)$. Let $U = W - p$ and define

\begin{align*}
\theta_b^+ &:= \left( p^\Delta \log \frac{1}{p} \right)^{-1} \int_{B_b^+ \times \bar{B_b}} I_p(p + U(x, y)) dx dy \\
\theta_b^- &:= \left( p^\Delta \log \frac{1}{p} \right)^{-1} \int_{\bar{B_b} \times B_b^-} I_p(p + U(x, y)) dx dy \\
\theta_b^{\pm, +} &:= \left( p^\Delta \log \frac{1}{p} \right)^{-1} \int_{B_b^{\pm} \times \bar{B_b}} I_p(p + U(x, y)) dx dy \\
\theta_b^{\pm, -} &:= \left( p^\Delta \log \frac{1}{p} \right)^{-1} \int_{\bar{B_b} \times B_b^{\pm}} I_p(p + U(x, y)) dx dy \\
\eta_b &:= p^{-\Delta} \int_{\bar{B_b} \times \bar{B_b}} I_p(p + U(x, y)) dx dy.
\end{align*}

Note that $\theta_b^+ \ge \theta_b^{\pm, +}$ and $\theta_b^- \ge \theta_b^{\pm, -}$. We proceed to give an upper bound on the values $t(F, U)$ in terms of the above values. This is an application of Finner's inequality.

\begin{lemma}[Finner's inequality, \cite{F1992}]
    \label{lem:finner}
    Let $\mu_j$ be a probability measure on $\Omega_j$ for $j \in [n] = \{ 1 , \dots , n \}$and let $\mu = \prod_{j=1}^n \mu_j$. For nonempty subsets $A_1 , \dots , A_m$ of $[n]$, let $\mu_A = \prod_{j \in A} \mu_j$ and $\Omega_A = \prod_{j \in A} \Omega_j$. Le $f_i \in L^{p_i} (\Omega_{A_i}, \mu_{A_i})$ for each $i \in [m]$. If $\sum_{i : A_i \ni j} (1/p_i) \le 1$ for all $j \in [n]$, then
    \[
        \int \prod_{i=1}^m |f_i| d\mu \le \prod_{i=1}^m \left( \int |f_i|^{p_i} d\mu_{A_i} \right)^{1/p_i}.
    \]    
\end{lemma}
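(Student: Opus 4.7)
The plan is to establish Finner's inequality by induction on $n$, the number of coordinate spaces, integrating out one variable at a time using classical H\"older's inequality. The core observation is that the balance hypothesis $\sum_{i:A_i \ni j} 1/p_i \le 1$ at a given coordinate $j$ is precisely what lets one apply H\"older inside the $x_j$-integral, and once that variable is absorbed the remaining hypothesis persists at every other coordinate, so the induction closes.

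For the base case $n = 1$, every $A_i$ equals $\{1\}$ (functions over the empty index set are absorbed as constants). To bridge from $\sum 1/p_i \le 1$ to the strict equality needed for classical H\"older, I would introduce a dummy function $f_0 \equiv 1$ with exponent $p_0$ defined by $1/p_0 := 1 - \sum_{i \ge 1} 1/p_i$; since $\mu_1$ is a probability measure one has $\|f_0\|_{p_0} = 1$, and standard H\"older with exponents $p_0, p_1, \dots, p_m$ yields the desired one-variable inequality.

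For the inductive step, I partition the indices into $J = \{i : n \in A_i\}$ and $K = [m] \setminus J$, and by Fubini write
\[
    \int \prod_{i=1}^m |f_i|\, d\mu = \int_{\Omega_{[n-1]}} \Big( \int_{\Omega_n} \prod_{i \in J} |f_i|\, d\mu_n \Big) \prod_{i \in K} |f_i|\, d\mu_{[n-1]}.
\]
The base-case padded-H\"older argument (valid pointwise in the remaining variables, since $\sum_{i \in J} 1/p_i \le 1$ by the hypothesis at $j = n$) bounds the inner integral by $\prod_{i \in J} g_i$, where $g_i(x_{A_i \setminus \{n\}}) := (\int |f_i|^{p_i}\, d\mu_n)^{1/p_i}$. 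The outer integral then has the Finner form on $n-1$ variables with functions $\{g_i\}_{i \in J} \cup \{f_i\}_{i \in K}$; the balance condition at every $j \le n-1$ is inherited verbatim from the original, so the inductive hypothesis yields the bound $\prod_{i \in J} \|g_i\|_{p_i} \prod_{i \in K} \|f_i\|_{p_i}$. A further application of Fubini gives $\|g_i\|_{p_i} = \|f_i\|_{p_i}$, closing the induction.

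The main obstacle, at both the base case and each inductive step, is the \emph{inequality} rather than equality in $\sum 1/p_i \le 1$, which cannot be fed directly to classical H\"older; the dummy-function padding resolves this uniformly, crucially exploiting the probability-measure assumption on each $\mu_j$. The remaining work is bookkeeping: measurability of the $g_i$ by Fubini-Tonelli, and a minor corner case when $A_i = \{n\}$ so that $g_i$ collapses to a constant after the inner integration, which is consistent with the convention that functions over the empty index set are absorbed as constants.
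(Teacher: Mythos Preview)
Your inductive argument is correct and is the standard proof of Finner's inequality: integrate out one coordinate at a time via H\"older, using the balance condition $\sum_{i:A_i\ni j}1/p_i\le 1$ at the coordinate being removed, and observe that the remaining balance conditions are inherited. The dummy-function padding to pass from $\sum 1/p_i\le 1$ to equality is also standard and correctly uses that each $\mu_j$ is a probability measure.

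Note, however, that the paper does not supply its own proof of this lemma: it is stated with a citation to Finner's original paper \cite{F1992} and used as a black box. So there is no ``paper's proof'' to compare against; your argument is simply a valid reconstruction of the classical one.
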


\begin{lemma}\label{lem:holder}
    Suppose $F \in \mathcal{F}_H$ with minimum vertex cover $S$ and vertex bipartition $(S, T)$. Let $w_F : \mathsf{E}(F) \to [0, 1]$ be a maximum fractional matching on $F$. In other words, $w_F$ satisfies the following two conditions.
    \begin{enumerate}[(a)]
        \item $\sum_{e \sim v} w_F(e) \le 1$ for every $v \in \mathsf{V}(F)$.
        \item $\sum_{e \sim v} w_F(e) = 1$ for every $v \in S$.
    \end{enumerate}
Then, 
\begin{multline*}
    \int U(\boldsymbol{x} | F) \boldsymbol{1}
    \left\{
    \begin{array}{ll}
    \forall v \in \mathsf{V}^+(S) \colon x_v \in B_b^+ \\
    \forall v \in \mathsf{V}^-(S) \colon x_v \in B_b^- \\
    \forall v \in \mathsf{V}^{\pm}(S) \colon x_v \in B_b^{\pm} \\
    \forall u \in T \colon x_u \in \overline B_b
    \end{array}
    \right\}
    d\boldsymbol{x} \\
    \le (\theta_b^+)^{\mathsf{v}^+(S)}(\theta_b^-)^{\mathsf{v}^-(S)}(\theta_b^{\pm, +})^{\sum_{\mathsf{E}(\mathsf{V}^{\pm}(S), T)} w_F(e)}(\theta_b^{\pm, -})^{\sum_{\mathsf{E}(T, \mathsf{V}^{\pm}(S))} w_F(e)} + o(p^{\mathsf{e}(F)}).
\end{multline*}
\end{lemma}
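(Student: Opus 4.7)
The plan is to apply Finner's inequality (Lemma~\ref{lem:finner}) to decompose the target integral into edge-wise pieces, and then control each piece using the pointwise near-$p$ bound from Lemma~\ref{lem:near-p}. For each edge $e = (e_1, e_2) \in \mathsf{E}(F)$, set
\[
    f_e(x, y) := U(x, y)\, \boldsymbol{1}\{x \in R_{e_1},\, y \in R_{e_2}\},
\]
where $R_v \in \{B_b^+, B_b^-, B_b^{\pm}, \overline B_b\}$ is the region assigned to $x_v$ by the integrand (for $v \in \mathsf{V}^+(S)$, $R_v = B_b^+$; for $v \in \mathsf{V}^-(S)$, $R_v = B_b^-$; for $v \in \mathsf{V}^{\pm}(S)$, $R_v = B_b^{\pm}$; for $v \in T$, $R_v = \overline B_b$). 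Since $\boldsymbol{1}^k = \boldsymbol{1}$ for any $k \ge 1$, one checks $\prod_e f_e(x_{e_1}, x_{e_2}) = U(\boldsymbol{x}|F) \, \boldsymbol{1}\{\forall v \colon x_v \in R_v\}$, so the target integral equals $\int \prod_e f_e\, d\boldsymbol{x}$.

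Fix a parameter $c = c(p) > 0$ with $c \to 0$ and $c = o(1/\log(1/p))$, and apply Finner's inequality with exponents $p_e := (1+c)/w_F(e)$. The fractional matching condition $\sum_{e \sim v} w_F(e) \le 1$ verifies the hypothesis: $\sum_{e \sim v} 1/p_e = (1+c)^{-1} \sum_{e \sim v} w_F(e) \le (1+c)^{-1} < 1$. Finner's inequality yields
\[
    \int \prod_e f_e\, d\boldsymbol{x} \le \prod_e \left( \int f_e^{(1+c)/w_F(e)}\, dx_{e_1} dx_{e_2} \right)^{w_F(e)/(1+c)}.
\]
For each edge, $(1+c)/w_F(e) \ge 1+c$ and $U \in [0,1]$ give $f_e^{(1+c)/w_F(e)} \le U^{1+c} \boldsymbol{1}_e$. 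Because we may assume $W \in \Gamma_p^\epsilon$ via Lemma~\ref{lem:phi-ep} with $\epsilon = 1/\log(1/p)$, we have $U \ge \epsilon p \ge p^{1+o(1)}$ wherever $U > 0$, so Lemma~\ref{lem:near-p} yields $U^{1+c} \le (1+o(1))\, I_p(p+U)/\log(1/p)$ pointwise. The region $R_{e_1} \times R_{e_2}$ matches exactly one of $B_b^+ \times \overline B_b$, $\overline B_b \times B_b^-$, $B_b^{\pm} \times \overline B_b$, $\overline B_b \times B_b^{\pm}$ (determined by the direction of $e$ and the type of its endpoint in $S$), so $\int f_e^{(1+c)/w_F(e)} \le (1+o(1))\, \theta(e)\, p^\Delta$ with $\theta(e)$ the correspondingly labeled element of $\{\theta_b^+, \theta_b^-, \theta_b^{\pm,+}, \theta_b^{\pm,-}\}$.

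Multiplying over edges, using $\sum_e w_F(e) = |S|$ (saturation on $S$) together with $\mathsf{e}(F) = \Delta|S|$, and regrouping by edge type yields the exponents $\sum_{e \in \mathsf{E}(\mathsf{V}^+(S), T)} w_F(e) = \mathsf{v}^+(S)$ on $\theta_b^+$, analogously $\mathsf{v}^-(S)$ on $\theta_b^-$, and $\sum_{\mathsf{E}(\mathsf{V}^\pm(S), T)} w_F(e)$, $\sum_{\mathsf{E}(T, \mathsf{V}^\pm(S))} w_F(e)$ on $\theta_b^{\pm,+}$ and $\theta_b^{\pm,-}$, each divided by the factor $1+c$. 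The main obstacle is eliminating this $(1+c)$ correction: since $\theta(e), p \in [0,1]$, the ratio of the Finner bound to the claimed bound is $p^{-c\mathsf{e}(F)/(1+c)} \prod_e \theta(e)^{-c w_F(e)/(1+c)}$, which tends to $1$ provided $c\log(1/p) \to 0$ and each $\theta(e)$ is at most polynomially small in $p$ (while $\theta(e) = 0$ is trivial). Because Lemma~\ref{lem:near-p} holds for any fixed $c > 0$ with some $\epsilon(c)$, a diagonal selection $c = c(p) \to 0$ slowly keeps it applicable, and the resulting multiplicative $(1+o(1))$ error on a quantity already bounded by $p^{\mathsf{e}(F)}$ is absorbed into the additive $o(p^{\mathsf{e}(F)})$ slack.
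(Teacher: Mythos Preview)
Your approach follows the same overall strategy as the paper---Finner's inequality combined with the pointwise bound of Lemma~\ref{lem:near-p}---but the specific perturbation you use to handle edges with $w_F(e)=1$ differs from the paper's, and this is where a genuine gap appears.

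The paper first treats the case $w_F(e)<1$ for all $e$ (so that each Finner exponent $1/w_F(e)>1$ and Lemma~\ref{lem:near-p} applies with a fixed $c_e=1/w_F(e)-1>0$), and then reduces the general case by perturbing the \emph{matching itself} to $w_F^t:=(1-t)w_F+t/\Delta$, letting $t=t(p)\to0$ slowly. The crucial feature of this perturbation is that $\sum_e w_F^t(e)=|S|$ exactly, so the power of $p$ coming out of Finner is $p^{\Delta|S|}=p^{\mathsf e(F)}$ with the correct exponent, and only the exponents on the $\theta(e)$'s are off by $O(t)$. Since each $\theta(e)=O(1)$ under the standing hypothesis $\mathbb E[I_p(p+U)]\lesssim p^\Delta\log(1/p)$, the resulting multiplicative error is $1+o(1)$.

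Your argument instead keeps $w_F$ fixed and takes Finner exponents $(1+c)/w_F(e)$. This divides \emph{all} the weights by $1+c$, so the $p$-factor in the Finner bound becomes $p^{\mathsf e(F)/(1+c)}$ rather than $p^{\mathsf e(F)}$, producing the extra factor $p^{-c\,\mathsf e(F)/(1+c)}$ you identify. To kill it you need $c\log(1/p)\to0$. But Lemma~\ref{lem:near-p} at the lower threshold $U=\epsilon p=p/\log(1/p)$ (the smallest nonzero value allowed by $\Gamma_p^\epsilon$) forces roughly $\epsilon(c)\asymp c$: one needs $U^{1+c}\log(1/p)\lesssim I_p(p+U)\approx U^2/(2p)$, which at $U=p/\log(1/p)$ gives $c\log(1/p)\gtrsim\log\log(1/p)\to\infty$. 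Thus the two requirements on $c$---slow enough for Lemma~\ref{lem:near-p}, fast enough for $p^{-c\,\mathsf e(F)/(1+c)}\to1$---are incompatible, and the diagonal selection you invoke cannot satisfy both. Replacing your scaling of the Finner exponents by the paper's perturbation of $w_F$ (which preserves the total weight $|S|$) removes this obstruction and is precisely the missing idea.
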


\begin{proof}
    First assume $w_F (e) < 1$ for all $e \in \mathsf{E}(F)$. $w_F \equiv 1/\Delta$ is an example of such a function. The proof is a direct application of Finner's inequality to the result of Lemma~\ref{lem:contri} with weights $1/w_F(e)$. Indeed, each $e = (v, u) \in \mathsf{E}(S, F)$ contributes a factor of $\left( \int U(x, y)^{1 / w_F(e)} \right)^{w_F(e)}$ to the upper bound, where the integral is taken over $B_b^+ \times \bar{B_b}$ if $v \in \mathsf{V}^+(S)$ and $B_b^{\pm} \times \bar{B_b}$ if $v \in \mathsf{V}^{\pm}(S)$. By Lemma~\ref{lem:near-p}, this is bounded by $(p^\Delta \theta_b^+)^{w_F(e)}$ and $(p^\Delta \theta_b^{\pm, +})^{w_F(e)}$, respectively. Using a similar result for all edges $(u, v) \in \mathsf{E}(F)$, we can deduce
    \begin{align*}
    \int U(\boldsymbol{x} | F) \boldsymbol{1}
        &\left\{
        \begin{array}{ll}
        \forall v \in S^+ \colon x_v \in B_b^{+} \\
        \forall v \in S^- \colon x_v \in B_b^{-} \\
        \forall v \in S^{\pm} \colon x_v \in B_b^{\pm} \\
        \forall u \in T \colon x_u \in \overline B_b
        \end{array}
        \right\}
        d\boldsymbol{x}
	\\
    &\le p^{\mathsf{e}(F)} \prod_{\substack{v \in \mathsf{V}^+(S) \\ e \sim v}} (\theta_b^+)^{w_F(e)}\prod_{\substack{v \in \mathsf{V}^-(S) \\ e \sim v}} (\theta_b^-)^{w_F(e)} \prod_{e \in \mathsf{E}(\mathsf{V}^{\pm}(S), T)} (\theta_b^{\pm, +})^{w_F(e)}\prod_{e \in \mathsf{E}(T, \mathsf{V}^{\pm}(S))} (\theta_b^{\pm, -})^{w_F(e)}  \\
	&=  p^{\mathsf{e}(F)} (\theta_b^+)^{\mathsf{v}^+(S)}(\theta_b^-)^{\mathsf{v}^-(S)}(\theta_b^{\pm, +})^{\sum_{\mathsf{E}(\mathsf{V}^{\pm}(S), T)} w_F(e)}(\theta_b^{\pm, -})^{\sum_{\mathsf{E}(T, \mathsf{V}^{\pm}(S))} w_F(e)} + o(p^{\mathsf{e}(F)}).
    \end{align*}

    Now consider the case where $w_F(e) \le 1$. Note that $(1-t)w_F + t/\Delta$ is also a maximum fractional matching whenever $t \in (0, 1)$, and that all weights are strictly less than $1$. Using the previous argument and taking $t \to 0$ sufficiently slowly, we can prove the same result even when $w_F(e) \le 1$.
\end{proof}

\subsection{Proof of Theorem~\ref{thm:lower-bd}}

From the above, we can now derive a lower bound for the variational problem. Out of all possible choices for $w_F$, it suffices to consider the two extreme cases where the exponents of $\theta_b^{\pm, +}$ are maximized or minimized. To this end, recall the following definition.

\begin{definition}
    For each $S \in \mathcal{S}_H$, let $F \in \mathcal{F}_H$ be the bipartite digraph associated with $S$ and $T = N(S)$. Recall the definition of $a_S$ and $b_S$.
\begin{align*}
a_S &:= \max \{ \sum_{e \in \mathsf{E}(S, T)} w_F(e) : w_F \text{ is a maximum fractional matching}\} \\
b_S &:= \max \{ \sum_{e \in \mathsf{E}(T, S)} w_F(e) : w_F \text{ is a maximum fractional matching}\}. \\
\end{align*}
\end{definition}

\begin{proof}[Proof of Theorem~\ref{thm:lower-bd}(a)]
    Suppose $W$ satisfies $t(H, W) \ge (1 + \delta) p^{\mathsf{e}(H)}$. Let $x_3 = \theta_b^{\pm, +} \vee \theta_b^{\pm, -}$ and choose $x_1, x_2, y_1, y_2$ such that $\theta_b^+ = x_1 y_1$, $\theta_b^- = x_2 y_2$, $\theta_b^{\pm, +} = x_3 y_1$ and $\theta_b^{\pm, -} = x_3 y_2$. It is clear from this definition that $0 \le y_1 , y_2 \le y_1 \vee y_2 = 1$ and $0 \le x_3 \le x_1 \wedge x_2$.
    
    Suppose $S \in \mathcal{S}_H$ with the associated bipartite graph $F \in \mathcal{F}_H$. Let $w_F$ be the maximum fractional matching with $\sum_{e \in \mathsf{E}(S, T)} w_F(e) = a_S$. By Lemma~\ref{lem:holder}, we get
\begin{align*}
    \int U(\boldsymbol{x} | F) \boldsymbol{1}
    &\left\{
    \begin{array}{ll}
    \forall v \in S^+ \colon x_v \in B_b^{+} \\
    \forall v \in S^- \colon x_v \in B_b^{-} \\
    \forall v \in S^{\pm} \colon x_v \in B_b^{\pm} \\
    \forall u \in T \colon x_u \in \overline B_b
    \end{array}
    \right\}
    d\boldsymbol{x}  \\
    &\le  p^{\mathsf{e}(F)} (\theta_b^+)^{\mathsf{v}^+(S)}(\theta_b^-)^{\mathsf{v}^-(S)}(\theta_b^{\pm, +})^{\sum_{\mathsf{E}(\mathsf{V}^{\pm}(S), T)} w_F(e)}(\theta_b^{\pm, -})^{\sum_{\mathsf{E}(T, \mathsf{V}^{\pm}(S))} w_F(e)} + o(p^{\mathsf{e}(F)}) \\
    &= p^{\mathsf{e}(F)} x_1^{\mathsf{v}^+(S)} x_2^{\mathsf{v}^(S)} x_3^{\mathsf{v}^{\pm}(S)} y_1^{a_S} y_2^{\mathsf{v}(S) - a_S} + o(p^{\mathsf{e}(F)}) \\
    &\le p^{\mathsf{e}(F)} x_1^{\mathsf{v}^+(S)} x_2^{\mathsf{v}^-(S)} (x_1 \wedge x_2)^{\mathsf{v}^{\pm}(S)} y_1^{a_S} + o(p^{\mathsf{e}(F)}).
\end{align*}

By multiplying both sides of this inequality by $p^{\mathsf{e}(H) - \mathsf{e}(F)}$ and summing over all $S \in \mathcal{S}_H$, we get
\[
    t(H, W) \le p^{\mathsf{e}(H)}g_H(x_1, x_2 , y_1 , 1) + o(p^{\mathsf{e}(H)}).
\]
Repeating a similar process for $b_S$, we have
\[
    t(H, W) \le p^{\mathsf{e}(H)}g_H(x_1, x_2 , 1 , y_2) + o(p^{\mathsf{e}(H)}).
\]
Since $y_1 \vee y_2 = 1$ this gives
\[
    t(H, W) \le p^{\mathsf{e}(H)}g_H(x_1, x_2 , y_1 , y_2) + o(p^{\mathsf{e}(H)}).
\]
$t(H, W) \ge (1 + \delta) p^{\mathsf{e}(S)}$, so this implies $1 + \delta \le g_H(x_1 ,x_2 , y_1 , y_2)$. Therefore,
\begin{align*}
    \mathbb{E}[I_p(W)] &\ge p^\Delta \log(1/p) (\theta_b^+ + \theta_b^-) \\ &= p^{\Delta} \log(1/p) ((x_1y_1 + x_2 y_2) + o(1)) \\
    &\ge p^{\Delta} \log (1/p) G(H, \delta).
\end{align*}
Taking the infimum over all $t(H, W) \ge (1 + \delta)p^{\mathsf{e}(H)}$ completes the proof.
\end{proof}

For part (b), we need the following two lemmas.
\begin{lemma}[{\cite[Proposition~6.5]{BGLZ2017}}]
    Suppose $H$ is $\Delta$-regular.
    \[
        \int U(\boldsymbol{x}|H) \boldsymbol{1} \{ \forall v \in \mathsf{V}(H): x_v \in \bar{B}_b \} d \boldsymbol{x} \le \eta_b^{\mathsf{e}(H) / \Delta} p^{\mathsf{e}(H)} + o(p^{\mathsf{e}(H)}).
    \]
\end{lemma}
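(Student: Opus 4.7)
The plan is to apply Finner's inequality (Lemma~\ref{lem:finner}) with uniform edge weights $p_e = \Delta$, so that the Finner constraint $\sum_{e \sim v} 1/p_e = \deg(v)/\Delta \le 1$ is satisfied with equality at every vertex precisely because $H$ is $\Delta$-regular. Associating to each directed edge $e = (u,v)$ the function $f_e(x_u, x_v) := U(x_u, x_v)\, \mathbf{1}\{x_u, x_v \in \overline B_b\}$, the product $\prod_e f_e$ equals $U(\boldsymbol{x}|H)\, \mathbf{1}\{\forall v \in \mathsf{V}(H),\ x_v \in \overline B_b\}$, using connectedness of $H$ so that the per-edge indicators collectively enforce the per-vertex indicators. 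Finner's inequality then yields
\[
\int U(\boldsymbol{x}|H)\, \mathbf{1}\{\forall v,\ x_v \in \overline B_b\}\, d\boldsymbol{x} \le \left( \int_{\overline B_b \times \overline B_b} U(x,y)^\Delta\, dx\, dy \right)^{\mathsf{e}(H)/\Delta}.
\]

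The next step is to show that the inner integral is at most $(1 + o(1))\, p^\Delta \eta_b$, which by the definition of $\eta_b$ reduces to a pointwise comparison $u^\Delta \le (1+o(1))\, I_p(p+u)$ on the range of values taken by $U$. By Lemma~\ref{lem:phi-ep} we may restrict to $W \in \Gamma_p^\epsilon$ with $\epsilon = p^{o(1)}$, so that $U$ takes values only in $\{0\} \cup [\epsilon p, 1-p]$; the case $u = 0$ is trivial. On $[\epsilon p, 1-p]$, since $\epsilon p \ge p^{1+o(1)}$, Lemma~\ref{lem:near-p} applied with a fixed small $c \in (0, \Delta - 1)$ gives $I_p(p+u) \ge (1 - o(1))\, u^{1+c} \log(1/p)$, whence
\[
\frac{u^\Delta}{I_p(p+u)} \le \frac{u^{\Delta - 1 - c}}{(1-o(1))\,\log(1/p)} = o(1)
\]
uniformly in $u$, using $\Delta \ge 2$ and $u \le 1$. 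Combining this with the Finner bound above yields
\[
\int U(\boldsymbol{x}|H)\, \mathbf{1}\{\forall v,\ x_v \in \overline B_b\}\, d\boldsymbol{x} \le \bigl((1 + o(1))\, p^\Delta \eta_b\bigr)^{\mathsf{e}(H)/\Delta} = \eta_b^{\mathsf{e}(H)/\Delta} p^{\mathsf{e}(H)} + o(p^{\mathsf{e}(H)}),
\]
the $(1+o(1))$ correction being absorbed into the additive $o(p^{\mathsf{e}(H)})$ error.

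The main obstacle I anticipate is ensuring that the pointwise comparison between $u^\Delta$ and $I_p(p+u)$ holds uniformly across the full range of $U$, since $I_p(p+u)$ behaves qualitatively differently depending on whether $u$ is comparable to $p$, a fractional power of $p$, or of constant order. The restriction to $W \in \Gamma_p^\epsilon$ is essential here: without it, the quadratic behavior $I_p(p+u) \asymp u^2/p$ for $u \ll p$ would cause the ratio $u^\Delta/I_p(p+u)$ to blow up, and Lemma~\ref{lem:near-p} would not be available in the right form. Everything else is a routine application of Finner's inequality, facilitated by $\Delta$-regularity and connectedness.
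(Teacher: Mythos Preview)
Your proposal is correct and follows exactly the paper's approach: the paper's proof is the single sentence ``This is again true by Finner's inequality with weights $1/\Delta$ on every edge,'' and you have correctly supplied the details, including the appeal to Lemma~\ref{lem:near-p} to pass from $\int U^\Delta$ to $p^\Delta \eta_b$ (the same maneuver the paper uses in the proof of Lemma~\ref{lem:holder}). One small remark: your pointwise bound actually gives $u^\Delta \le \frac{1+o(1)}{\log(1/p)} I_p(p+u)$, which for small $p$ yields $\int_{\overline B_b \times \overline B_b} U^\Delta \le p^\Delta \eta_b$ outright, so the additive $o(p^{\mathsf{e}(H)})$ is not even needed.
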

\begin{proof}
    This is again true by Finner's inequality with weights $1/\Delta$ on every edge.
\end{proof}

\begin{lemma}[{\cite[Lemma~5.4]{BGLZ2017}}]
    \label{lem:convex}
    Lef $f, g$ be convex nondecreasing functions on $[0, \infty)$ and let $a > 0$. The minimum of $x + y$ over the region $\{ x ,y \ge 0 : f(x) + g(y) \ge a \}$ is attained at either $x = 0$ or $y = 0$.
\end{lemma}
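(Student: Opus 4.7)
The plan is a short convexity argument by contradiction. First I would dispose of the degenerate case $f(0) + g(0) \ge a$, in which $(0,0)$ is feasible and achieves $x+y = 0$, so both coordinate conditions hold trivially. Assume from now on that $f(0) + g(0) < a$, and set $c := \inf\{x + y : x, y \ge 0,\, f(x) + g(y) \ge a\} > 0$. Continuity of $f$ and $g$ (automatic from convexity on $[0, \infty)$) makes the feasible set closed, and standard compactness on a sufficiently large rectangle shows that the infimum is attained. The goal then reduces to showing that at least one of the two segment endpoints $(c, 0)$ or $(0, c)$ is feasible, since any such endpoint is itself a minimizer lying on a coordinate axis.

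For this I would argue by contradiction: suppose neither endpoint is feasible, so $f(c) + g(0) < a$ and $f(0) + g(c) < a$. Fix any minimizer $(x^*, y^*)$; by our standing assumption $x^*, y^* > 0$ and $x^* + y^* = c$. The key observation is that $x^* = \tfrac{x^*}{c}\cdot c + \tfrac{y^*}{c}\cdot 0$ and $y^* = \tfrac{y^*}{c}\cdot c + \tfrac{x^*}{c}\cdot 0$ are convex combinations, so applying convexity of $f$ and $g$ separately on $[0, \infty)$ and summing yields
\[
    f(x^*) + g(y^*) \;\le\; \tfrac{x^*}{c}\bigl(f(c) + g(0)\bigr) + \tfrac{y^*}{c}\bigl(f(0) + g(c)\bigr) \;<\; \tfrac{x^*}{c}\, a + \tfrac{y^*}{c}\, a \;=\; a,
\]
contradicting feasibility of $(x^*, y^*)$. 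Hence at least one of $(c, 0)$, $(0, c)$ is feasible and attains the minimum.

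There is no real obstacle beyond spotting the correct convex combination. I would note that the monotonicity of $f$ and $g$, though stated in the hypothesis, is not actually needed for the decisive inequality; it only ensures that the feasible region is upward-closed, which is convenient in the applications of this lemma (where $f, g$ arise from contributions of $\theta_b^{\pm, \pm}$-type quantities) but plays no role in the argument itself.
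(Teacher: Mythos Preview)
The paper does not supply its own proof of this lemma; it simply cites \cite[Lemma~5.4]{BGLZ2017} and invokes it as a black box in the proof of Theorem~\ref{thm:lower-bd}(b). Your argument is correct and is essentially the standard one.

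One small quibble with your closing remark: the claim that continuity is ``automatic from convexity on $[0,\infty)$'' is not quite right at the endpoint~$0$ without monotonicity. A convex function on $[0,\infty)$ is in general only upper semicontinuous at $0$ (e.g.\ $f(0)=1$, $f(x)=0$ for $x>0$ is convex); the nondecreasing hypothesis is what supplies lower semicontinuity there and hence continuity. So monotonicity does quietly enter the existence-of-minimizer step, even though, as you say, it plays no role in the decisive convex-combination inequality.
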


\begin{proof}[Proof of Theorem~\ref{thm:lower-bd}(b)]
    By applying similar methods as before, we get
    \[
        \Phi_p(H, \delta) \ge \inf \{ x_1 y_1 + x_2 y_2 + z : g_H(x_1 , x_2 , y_1 , y_2) + z^{\mathsf{e}(H) / \Delta} = 1 + \delta \}.
    \]
    By applying Lemma~\ref{lem:convex} to $x_1$ and $z$ (fixing all other variables), we see that the infimum is obtained when either $x_1 = 0$ or $z = 0$. Repeating this argument with $x_2$ in the place of $x_1$, we see that either $x_1 = x_2 = 0$ or $z = 0$. Since $\mathsf{e}(H) / \Delta = \mathsf{v}(H) / 2$, this implies part (b) of Theorem~\ref{thm:lower-bd}.
\end{proof}

\section{Gaps between upper and lower bounds}\label{sec:gap}

\subsection{Comparing $f_H$ and $g_H$}
The gap between $f_H$ and $g_H$ is caused by the difference between $a_S$ and $A_S$. Indeed, if $a_S = A_S$ and $b_S =  B_S$ for all $S$, then $f_H = g_H$ and we can solve the variational problem.The following proposition gives information on the range of $a_S, b_S$.

\begin{proposition}\label{prop:asbd}
    For each $S \in \mathcal{S}_H$, let $F \in \mathcal{F}_H$ be the bipartite digraph associated with $S$ and $T = \mathsf{N}(S)$. Further, let $\tau(G) = \tau(\mathsf{V}, \mathsf{E})$ denote the minimum size of a vertex cover on $G = (\mathsf{V}, \mathsf{E})$. We have
    \[
         \mathsf{v}^+(S) \le a_S \le \tau(S \cup T, \mathsf{E}(S, T)) \le \min(\mathsf{v}^+(S) + \mathsf{v}^{\pm} (S), A_S)
    \]\[
        \mathsf{v}^-(S) \le b_S \le \tau(S \cup T, \mathsf{E}(T, S)) \le \min(\mathsf{v}^-(S) + \mathsf{v}^{\pm} (S), B_S)
    \]
\end{proposition}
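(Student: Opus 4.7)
The plan is to establish the three-link chain for $a_S$ in turn; the chain for $b_S$ then follows by reversing every edge orientation of $H$, which swaps $\mathsf{E}(S,T) \leftrightarrow \mathsf{E}(T,S)$, $\mathsf{V}^+ \leftrightarrow \mathsf{V}^-$, and $A_S \leftrightarrow B_S$ throughout. None of the steps is especially hard; the proof is essentially an unpacking of definitions, together with a single invocation of K\"onig's theorem.

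For the leftmost inequality $\mathsf{v}^+(S) \le a_S$, I would show that \emph{every} maximum fractional matching $w_F$ already witnesses the bound. Each $v \in \mathsf{V}^+(S)$ satisfies $\mathsf{n}^-(v) = 0$, so every edge of $F$ incident to $v$ lies in $\mathsf{E}(S,T)$. Condition~(b) in the definition of a maximum fractional matching forces $\sum_{e \sim v} w_F(e) = 1$. Summing over $v \in \mathsf{V}^+(S)$ (the incidence sets are disjoint because $S$ is an independent set of $H^*$, so no edge of $\mathsf{E}(S,T)$ is incident to two vertices of $S$) yields $\sum_{e \in \mathsf{E}(S,T)} w_F(e) \ge \mathsf{v}^+(S)$, which by definition of $a_S$ gives the claim.

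For the middle inequality $a_S \le \tau(S \cup T, \mathsf{E}(S,T))$, the key observation is that the restriction of any $w_F$ to $\mathsf{E}(S,T)$ is still a feasible fractional matching of the undirected bipartite graph $F' := (S \cup T, \mathsf{E}(S,T))$: discarding the edges of $\mathsf{E}(T,S)$ only tightens the per-vertex constraint $\sum_{e \sim v} w_F(e) \le 1$. Hence $\sum_{e \in \mathsf{E}(S,T)} w_F(e)$ is bounded by the fractional matching number of $F'$, which by LP duality equals its fractional vertex cover number, and, because $F'$ is bipartite, by K\"onig's theorem equals the integer cover number $\tau(F')$. Taking the supremum over maximum fractional matchings of $F$ gives $a_S \le \tau(F')$.

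For the rightmost inequality $\tau(S \cup T, \mathsf{E}(S,T)) \le \min(\mathsf{v}^+(S) + \mathsf{v}^{\pm}(S),\, A_S)$, I would exhibit two explicit vertex covers of $F'$. Every edge $(u,v) \in \mathsf{E}(S,T)$ has tail $u \in S$ with $\mathsf{n}^+(u) \ge 1$, hence $u \in \mathsf{V}^+(S) \cup \mathsf{V}^{\pm}(S)$; thus $\mathsf{V}^+(S) \cup \mathsf{V}^{\pm}(S)$ is a vertex cover of size $\mathsf{v}^+(S) + \mathsf{v}^{\pm}(S)$. Likewise every such edge has head in $\mathsf{N}^+(S)$ by definition, so $\mathsf{N}^+(S)$ is a vertex cover of size $A_S$. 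The only nontrivial ingredient is K\"onig's theorem in the middle step; everything else is a straightforward check against the definitions, so I do not anticipate any substantive obstacle.
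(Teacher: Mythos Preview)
Your proposal is correct and follows essentially the same route as the paper: the paper proves the first inequality by noting that every maximum fractional matching already contributes weight $1$ at each vertex of $\mathsf{V}^+(S)$, the middle inequality by restricting $w_F$ to $\mathsf{E}(S,T)$ and invoking the bipartite equality of fractional matching number and vertex cover number (via Lemmas~\ref{lem:frac-matching} and~\ref{lem:konig}), and the last inequality by exhibiting the same two explicit covers $\mathsf{V}^+(S)\cup\mathsf{V}^{\pm}(S)$ and $\mathsf{N}^+(S)$. The only cosmetic difference is that you phrase the middle step via LP duality plus K\"onig, whereas the paper goes through the integrality of bipartite fractional matchings plus K\"onig; these are equivalent.
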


\begin{proof}
    We only prove the first set of inequalities. The last inequality is true since $\mathsf{V}^+(S) \cup \mathsf{V}^{\pm} (S)$ and $\mathsf{N}^+(S)$ are vertex covers of $\mathsf{E}(S, \mathsf{N}^+(S))$ (recall that $A_S = |\mathsf{N}^+(S)|$). The first inequality is due to the fact that for any maximum fractional matching of $F$, the weights of edges in $\mathsf{E}(S, T)$ starting from $S$ must equal $\mathsf{v}^+(S)$. To see the second inequality, suppose $w_F$ is a maximum fractional matching on $F$. Note that $w_F$ with the domain restricted to $E(S, T)$ is a (not necessarily maximum) fractional matching on the subgraph $(\mathsf{V}, \mathsf{E}(S, T))$ of $G$. The next two lemmas complete our proof.
\end{proof}

\begin{lemma}[{\cite[Exercise~7.2.1]{LP2009}}]
    \label{lem:frac-matching}
    For any bipartite digraph $G$, the maximum fractional matching number is equal to the maximum matching number.
\end{lemma}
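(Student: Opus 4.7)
The plan is to prove this classical fact via the total unimodularity of the vertex-edge incidence matrix of a bipartite graph. Since the definition of a fractional matching $w_F:\mathsf{E}(F)\to[0,1]$ with $\sum_{e\sim v} w_F(e)\le 1$ at every vertex ignores edge orientations, I would work with the underlying undirected bipartite multigraph (which may have two parallel edges between a pair of vertices, one per direction). Bipartiteness and matching constraints transfer cleanly to this setting.

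First, I would encode the maximum fractional matching as the linear program
\[
\nu^*(F) = \max\{\mathbf{1}^T w : A w \le \mathbf{1},\ w \ge 0\},
\]
where $A$ is the vertex-edge incidence matrix of $F$. By the fundamental theorem of linear programming, the optimum is attained at a vertex of the feasible polytope. Next I would invoke the standard fact that the incidence matrix of a bipartite graph is totally unimodular: for any square submatrix $M$, either some column has at most one nonzero entry (and we induct after cofactor expansion), or every column has exactly two nonzeros, one on each side of the bipartition; in the latter case summing the rows corresponding to one side and subtracting the sum of the rows on the other side yields zero, so the rows are linearly dependent and $\det M = 0$. Combined with the integer right-hand side $\mathbf{1}$, total unimodularity forces every vertex of the feasible polytope to have coordinates in $\{0,1\}$, so the LP optimum equals the maximum (integer) matching number $\nu(F)$.

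There is no real obstacle here, as the result is classical (equivalent to König's theorem via LP duality between fractional matching and fractional vertex cover). An alternative combinatorial route, which I would mention if a self-contained argument were preferred, is to take a maximum fractional matching $w_F$ minimizing the number of non-integer edges, observe that the non-integer edges form a subgraph of minimum degree at least $2$ at every non-leaf, and then exploit the absence of odd cycles in a bipartite graph to find an even alternating cycle or an alternating path between two leaves, along which one can perturb weights by $\pm\epsilon$ without decreasing the total weight while strictly increasing the number of integer-valued edges, a contradiction.
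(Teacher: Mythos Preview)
Your argument is correct: the total-unimodularity route is the standard clean proof, and your alternative augmenting-along-an-even-cycle argument is also valid. Note, however, that the paper does not actually prove this lemma; it simply states it with the citation \cite[Exercise~7.2.1]{LP2009} and writes ``For proofs, see the referenced texts.'' So there is nothing to compare against --- you have supplied a complete proof where the paper deliberately defers to the literature.
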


\begin{lemma}[K\"onig's Theorem, {\cite[Theorem~1.1.1]{LP2009}}]
        \label{lem:konig}
        For any bipartite digraph $G$, the maximum matching number is equal to the size of the minimum vertex cover.
\end{lemma}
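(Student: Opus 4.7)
The plan is to prove K\"onig's theorem by the classical augmenting-path construction, which produces a vertex cover whose size equals that of a given maximum matching. Since matching and vertex cover depend only on the underlying undirected graph, orientations may be ignored throughout. The easy direction $|M| \le \tau(G)$ is immediate: any vertex cover must contain at least one endpoint of each matching edge, and these endpoints are distinct across $M$.

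For the reverse inequality, I would fix a bipartition $(X, Y)$ of $G$, take a maximum matching $M$, and let $U \subseteq X$ denote the set of $M$-unmatched vertices of $X$. Define $Z$ to be the set of vertices reachable from $U$ by $M$-alternating walks (edges alternating outside and inside $M$, starting with a non-matching edge), and set
\[
    K := (X \setminus Z) \cup (Y \cap Z).
\]
To see that $K$ is a vertex cover, consider an edge $xy$ with $x \in X$, $y \in Y$: if $x \notin Z$ then $x \in K$, while if $x \in Z$ then either $xy \in M$ (in which case the alternating walk reaching $x$ must have arrived via the matching edge $yx$, placing $y \in Z$), or $xy \notin M$ (in which case any walk ending at $x$ can be prolonged by the non-matching edge $xy$, again placing $y \in Z$), so $y \in Y \cap Z \subseteq K$.

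It remains to show $|K| = |M|$, which will combine with the easy direction to yield $\tau(G) = |M|$. Maximality of $M$ is invoked twice: every $y \in Y \cap Z$ is $M$-matched, for otherwise the alternating walk to $y$ would be an augmenting path; and every $x \in X \setminus Z$ is $M$-matched since $U \subseteq Z$. Thus the assignment $\varphi : K \to M$ sending each $v \in K$ to its unique matching edge is well-defined, and surjectivity of $\varphi$ is exactly the covering property applied to an edge of $M$. The main obstacle is injectivity of $\varphi$, which reduces to ruling out a matching edge $xy \in M$ with $x \in X \setminus Z$ and $y \in Y \cap Z$; the key point is that by bipartite parity, any alternating walk from $U \subseteq X$ to $y \in Y$ ends with a non-matching edge, so appending the matching edge $yx$ would force $x \in Z$, a contradiction. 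Hence $|K| = |M|$, giving $\tau(G) \le |M|$ and completing the proof.
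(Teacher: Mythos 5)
The paper does not prove this lemma; it defers entirely to \cite[Theorem~1.1.1]{LP2009}, writing only ``For proofs, see the referenced texts.'' Your proposal supplies a correct, self-contained proof via the standard alternating-walk (Hungarian) argument: letting $Z$ be the set of vertices reachable by $M$-alternating walks from the unmatched vertices $U \subseteq X$, you take $K = (X\setminus Z)\cup(Y\cap Z)$, verify it is a cover using alternation and bipartite parity, and establish $|K| = |M|$ by matching each vertex of $K$ to its unique $M$-edge. You also rightly note that orientations are irrelevant, since matching and vertex cover depend only on the underlying undirected bipartite graph (antiparallel edges, if present, behave as a single edge for both notions). The one small gloss worth flagging: when concluding that every $y \in Y \cap Z$ is matched, an alternating \emph{walk} from $U$ to an unmatched $y$ must be trimmed to a simple alternating path before invoking augmentation against the maximality of $M$ --- a routine step, but it should be said. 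Overall your proof is correct and is a faithful reconstruction of the classical argument that the paper leaves to the cited text.
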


For proofs, see the referenced texts.

Even if $a_S < A_S$, there is still a chance that we get matching upper and lower bounds. Some possible cases are described in the following remark. Justification for each of them is straightforward.

\begin{remark} \quad \label{rmk}
    \begin{enumerate}
        \item If $a_S = A_S$ for all $S \in \mathcal{S}_H$ and $G(H, \delta)$ is obtained when $y_2 = 1$, then $F(H, \delta) = G(H, \delta)$. Similar for $b_S = B_S$.
        \item If $G(H, \delta)$ is obtained when $y_1 = y_2 = 1$, then we have $F(H, \delta) = G(H, \delta)$.
    \end{enumerate}
\end{remark}

Thus, it is important to know when the lower bound is obtained, and whether $y_i = 1$ in those cases. To this end, we show the following propositions.

\begin{proposition}\label{prop:x1=x2}
    $G(H, \delta)$ is obtained when $x_1 = x_2$. Hence, finding $G(H, \delta)$ is reduced to a $2$-dimensional variational problem, i.e.
    \[
        G(H, \delta) = \inf_{\substack{0 \le x \\ 0 \le y_1 , y_2 \\ y_1 \vee y_2 = 1}} \{ x(y_1 + y_2) : g_H (x, x, y_1 , y_2) = 1 + \delta \}.
    \]
\end{proposition}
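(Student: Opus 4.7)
The plan is to take any feasible tuple $(x_1, x_2, y_1, y_2)$ with $g_H(x_1,x_2,y_1,y_2) \ge 1+\delta$ and produce a new feasible tuple with $x_1 = x_2$ whose objective $x_1 y_1 + x_2 y_2$ is no larger. This will show that $G(H,\delta)$ is attained along the diagonal $\{x_1 = x_2\}$; combined with the boundary condition $y_1 \vee y_2 = 1$ from Theorem~\ref{thm:lower-bd}, setting $x := x_1 = x_2$ collapses every factor $(x_1 \wedge x_2)^{\mathsf{v}^{\pm}(S)}$ to $x^{\mathsf{v}^{\pm}(S)}$, so that $g_H(x,x,y_1,y_2) = \sum_S x^{|S|} y_1^{a_S} y_2^{b_S}$, establishing the stated 2-dimensional formulation.

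Without loss of generality I assume $x_1 \le x_2$; the opposite case is handled by the symmetry of $g_H$ under the simultaneous swap $(x_1, y_1, a_S) \leftrightarrow (x_2, y_2, b_S)$. Also $x_2 > 0$, since otherwise $g_H = 1 < 1 + \delta$ would contradict feasibility. I consider the replacement
\[
(x_1', x_2', y_1', y_2') := \bigl(x_2,\, x_2,\, (x_1/x_2) y_1,\, y_2\bigr).
\]
This is admissible, since $x_1/x_2 \in [0,1]$ forces $y_1' \in [0, y_1] \subseteq [0, 1]$, and it preserves the objective exactly:
\[
x_1' y_1' + x_2' y_2' = x_2 \cdot (x_1/x_2) y_1 + x_2 y_2 = x_1 y_1 + x_2 y_2.
\]

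The core step is to verify the constraint, namely $g_H(x_1', x_2', y_1', y_2') \ge g_H(x_1, x_2, y_1, y_2)$. Because $x_1 \wedge x_2 = x_1$ in the original summand and $x_1' \wedge x_2' = x_2$ in the new one, a direct expansion shows that the ratio of the $S$-th terms of $g_H$ simplifies to
\[
\left(\frac{x_2}{x_1}\right)^{\mathsf{v}^+(S) + \mathsf{v}^{\pm}(S) - a_S}.
\]
This ratio is $\ge 1$, since $x_2/x_1 \ge 1$ and the exponent $\mathsf{v}^+(S) + \mathsf{v}^{\pm}(S) - a_S$ is nonnegative by Proposition~\ref{prop:asbd} (the bound $a_S \le \mathsf{v}^+(S) + \mathsf{v}^{\pm}(S)$). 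Summing over $S \in \mathcal{S}_H$ gives $g_H(x_1', x_2', y_1', y_2') \ge g_H(x_1, x_2, y_1, y_2) \ge 1 + \delta$, so the new tuple is feasible. The boundary case $x_1 = 0$ is handled by inspecting summands directly: both $g_H$'s reduce to $\sum_{S : a_S = 0} x_2^{|S|} y_2^{b_S}$. Beyond this, I do not foresee any real obstacle; the only substantive ingredient is the upper bound from Proposition~\ref{prop:asbd}, and the rest is routine algebra.
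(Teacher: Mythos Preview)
Your argument is correct and essentially the same as the paper's: both scale the smaller of $x_1,x_2$ up to match the larger while proportionally shrinking the corresponding $y_i$, and both rely on the bound $a_S \le \mathsf{v}^+(S)+\mathsf{v}^\pm(S)$ (resp.\ $b_S \le \mathsf{v}^-(S)+\mathsf{v}^\pm(S)$) from Proposition~\ref{prop:asbd} to verify that $g_H$ does not decrease. The only cosmetic difference is that the paper treats the case $x_1>x_2$ and phrases the move as ``increase $x_2$, decrease $y_2$ with $x_2 y_2$ fixed,'' whereas you give the explicit one-step substitution for $x_1\le x_2$ and invoke the mirror argument for the other case.
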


\begin{proof}
    Suppose $x_1 > x_2$. We shall prove that we can always increase $g_H (x_1, x_2, y_1, y_2)$ while keeping $x_1 y_1 + x_2 y_2$ constant. Indeed, note that
    \[
        g_H(x_1 , x_2 , y_1 , y_2) = \sum_{S \in \mathcal{S}_H} x_1^{\mathsf{v}^+(S)} x_2^{\mathsf{v}^-(S) + \mathsf{v}^{\pm}(S)} y_1^{a_S} y_2^{b_S}.
    \]
    By Proposition~\ref{prop:asbd}, $\mathsf{v}^-(S) + \mathsf{v}^{\pm}(S) \ge b_S$. Thus, under constant $x_2 y_2$, we can increase $g_H$ by increasing $x_2$ and decreasing $y_2$.
\end{proof}

\begin{proposition}\label{prop:F-2G}
    \[
        G(H, \delta) \le F(H, \delta) \le 2 G(H, \delta).
    \]
\end{proposition}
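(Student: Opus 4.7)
The plan is to split into the two inequalities. The direction $G(H,\delta) \le F(H,\delta)$ is essentially immediate and was already observed in the paper: $a_S \le A_S$, $b_S \le B_S$ together with $y_1, y_2 \in [0,1]$ give the pointwise bound $f_H \le g_H$, so any $(x_1,x_2,y_1,y_2)$ feasible for the $F$-infimum is automatically feasible for the $G$-infimum with the same objective value.

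For the reverse inequality $F(H,\delta) \le 2G(H,\delta)$, the strategy is to convert a (near-)optimizer of $G$ into a feasible point of $F$ whose objective is at most twice as large, by \emph{saturating both $y$-variables to $1$}. Let $(x_1^*, x_2^*, y_1^*, y_2^*)$ achieve $G(H,\delta)$. By Proposition~\ref{prop:x1=x2} one may assume $x_1^* = x_2^* =: x^*$, and by Theorem~\ref{thm:lower-bd} one may assume $y_1^* \vee y_2^* = 1$; without loss of generality $y_1^* = 1$, so that
\[
    G(H,\delta) \;=\; x^* y_1^* + x^* y_2^* \;=\; x^*(1 + y_2^*) \;\ge\; x^*.
\]
I would then propose $(x^*, x^*, 1, 1)$ as a candidate for the $F$-infimum. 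Using $\mathsf{v}^+(S) + \mathsf{v}^-(S) + \mathsf{v}^\pm(S) = |S|$ and collapsing both $y$-factors to $1$,
\[
    f_H(x^*, x^*, 1, 1) \;=\; \sum_{S \in \mathcal{S}_H} (x^*)^{|S|} \;\ge\; \sum_{S \in \mathcal{S}_H} (x^*)^{|S|} (y_2^*)^{b_S} \;=\; g_H(x^*, x^*, 1, y_2^*) \;=\; 1 + \delta,
\]
where the inequality uses $(y_2^*)^{b_S} \le 1$. Hence $(x^*, x^*, 1, 1)$ is feasible for $F$, giving $F(H,\delta) \le 2 x^* \le 2 G(H,\delta)$.

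The proof is essentially forced by the already-established structural results on $G$-optimizers, so there is no serious obstacle. The only minor technical point is attainment of the $G$-infimum: on the constraint surface $g_H = 1+\delta$ the variable $x^*$ is bounded above (any single nonzero term of $g_H$ yields such a bound), so a minimizer exists by compactness together with continuity of $g_H$. If attainment ever failed one could simply apply the argument to a sequence of $\varepsilon$-optimizers and let $\varepsilon \to 0$.
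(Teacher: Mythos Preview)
Your proof is correct and follows essentially the same approach as the paper's: take a $G$-optimizer with $x_1=x_2$ and $y_1\vee y_2=1$, push both $y$-variables to $1$, and use that at $y_1=y_2=1$ the polynomials $f_H$ and $g_H$ coincide (both reduce to $\sum_{S}(x^*)^{|S|}$), yielding a feasible point for $F$ with objective $2x^*\le 2G(H,\delta)$. The only cosmetic difference is that the paper writes the key step as $f_H(x_1,x_2,1,1)=g_H(x_1,x_2,1,1)\ge g_H(x_1,x_2,y_1,y_2)$, while you spell out the common value $\sum_S (x^*)^{|S|}$ explicitly.
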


\begin{proof}
    The first inequality has already been established. Now suppose $g_H(x_1, x_2, y_1, y_2) = 1 + \delta$ with $x_1 y_1 + x_2 y_2 = G(H, \delta)$. By Proposition~\ref{prop:x1=x2}, we may assume $x_1 = x_2$ and $y_1 \vee y_2 = 1$. Note that
    \[
        f_H(x_1 , x_2 , 1, 1) = g_H (x_1 , x_2 , 1 , 1) \ge g_H(x_1 , x_2 , y_1 , y_2) = 1 + \delta.
    \]
    This implies $x_1 + x_2 \ge F(H, \delta)$. Therefore,
    \[
        F(H, \delta) \le x_1 + x_2 = 2x_1 \le 2 x_1 (y_1 + y_2) = 2G(H, \delta).
    \]
\end{proof}
\begin{proposition}\label{prop:2s}
    If $2a_S \ge |S|$ for all $S$, then the $G(H, \delta)$ is obtained when $y_1 = 1$. Similarly, if $2b_S \ge |S|$ for all $S$, then the $G(H, \delta)$ is obtained when $y_2 = 1$.
\end{proposition}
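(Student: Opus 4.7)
The plan is to combine Proposition~\ref{prop:x1=x2} with a first-order analysis on a single slice of the feasible set. By Proposition~\ref{prop:x1=x2} I may restrict attention to $x_1 = x_2 = x$, so the problem becomes: minimize $x(y_1 + y_2)$ subject to
\[
    g_H(x, x, y_1, y_2) = \sum_{S \in \mathcal{S}_H} x^{|S|} y_1^{a_S} y_2^{b_S} = 1 + \delta,
\]
over $x > 0$ and $0 \le y_1, y_2 \le 1$ with $y_1 \vee y_2 = 1$. The feasible set is the union of the two slices $\{y_1 = 1\}$ and $\{y_2 = 1\}$, which meet at the corner $(y_1, y_2) = (1,1)$. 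Since the slice $\{y_1 = 1\}$ already has $y_1 = 1$ by construction, it suffices to show that the minimum on the slice $\{y_2 = 1\}$ is attained at its corner $y_1 = 1$.

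On that slice, I would set up a Lagrange multiplier for $x(y_1 + 1)$ under the constraint $\sum_S x^{|S|} y_1^{a_S} = 1 + \delta$. Eliminating the multiplier and multiplying through by $y_1$ reduces the first-order condition at an interior critical point to
\[
    \sum_{S \in \mathcal{S}_H} \bigl(a_S - y_1(|S| - a_S)\bigr)\, x^{|S|} y_1^{a_S} = 0.
\]
The central observation is that under the hypothesis $2a_S \ge |S|$ each coefficient is nonnegative for $y_1 \in (0, 1]$: the empty set contributes zero; for any nonempty $S$ one has $a_S > 0$, since $a_S \ge \mathsf{v}^+(S)$ by Proposition~\ref{prop:asbd} and $2a_S \ge |S|$ forces $|S| = 0$ whenever $a_S = 0$; and then $y_1(|S| - a_S) \le |S| - a_S \le a_S$, with the inequality strict once $y_1 < 1$ and $|S| > a_S$ (the residual case $|S| = a_S$ gives coefficient $a_S > 0$). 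Hence for $x > 0$ and $y_1 \in (0, 1)$ every summand is strictly positive, so no interior critical point exists.

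The remaining boundaries on this slice, $x = 0$ and $y_1 = 0$, both collapse $g_H(x, x, y_1, 1)$ to $\sum_{S:\,a_S = 0} x^{|S|} = 1$ (again only $S = \emptyset$ contributes under the hypothesis), in violation of $g_H = 1 + \delta > 1$. Thus the minimum on $\{y_2 = 1\}$ is attained at the corner $y_1 = 1$, and combining with the slice $\{y_1 = 1\}$ shows that $G(H, \delta)$ is attained with $y_1 = 1$. The second statement follows by the symmetric argument with $a_S, y_1$ replaced by $b_S, y_2$. The only real obstacle is the sign analysis of the coefficients $a_S - y_1(|S|-a_S)$: the hypothesis $2a_S \ge |S|$ is precisely what keeps every term nonnegative throughout $y_1 \in (0, 1]$, with strict positivity at $y_1 < 1$.
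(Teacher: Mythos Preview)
Your argument is correct and essentially identical to the paper's: both reduce via Proposition~\ref{prop:x1=x2} to the slice $x_1=x_2=x$, $y_2=1$, and both hinge on the inequality $a_S - y_1(|S|-a_S) \ge 0$ for $y_1 \le 1$ (the paper phrases this as term-by-term monotonicity of $x^{|S|}y_1^{a_S}$ along level sets $x(1+y_1)=C$, writing $\alpha = |S|/a_S \in [1,2]$). One place to tighten: the strict positivity of your Lagrange sum does more than rule out interior critical points---it fixes the \emph{sign} of the derivative of the objective along the constraint curve (strictly decreasing in $y_1$), which is what actually forces the minimum to $y_1=1$ rather than merely excluding the infeasible boundary $y_1=0$.
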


\begin{proof}
    Suppose $2a_S \ge |S|$ for all $S$ and $0 \le y_1 < y_2 = 1$. It suffices to show that under constant $x(1 + y_1)$, $g_H$ increases as $y_1$ increases (Here, $x = x_1 = x_2$ by Proposition~\ref{prop:x1=x2}). Now note that every term of $g_H$ is of the form $x^{|S|} y_1^{a_S}$, where $c > 0$. Thus, the problem is reduced to showing that $x^{\alpha}y_1$ as $y_1$ increases ($x(1 + y_1) = C$ is constant) for all $1 \le \alpha \le 2$ (note that $a_S \le |S|$ by Proposition~\ref{prop:asbd}). This can be justified by plugging $y_1 = C/x - 1$ and differentiating with respect to $x$.
\end{proof}
\subsection{Examples of matching bounds}

\begin{example}[Stars]
Suppose $H$ is a star with one $\Delta$-degree vertex $v$ and $\Delta$ $1$-degree vertices $u_1 , \dots , u_\Delta$ adjacent to $v$. If all vertices are pointing away from $v$, then
\[
    f_H = 1 + x_1 y_1^{\Delta}, \quad g_H = 1 + x_1 y_1.
\]
For both $f_H = 1 + \delta$ and $g_H = 1 + \delta$, $x_1 y_1 + x_2 y_2$ is minimized when $x_1 = \delta$, $y_1 = 1$, and $x_2 = y_2 = 0$. As such, we have
\[
\frac{\Phi_{n, p}(H, \delta)}{n^2 p^\Delta \log(1/p)}  = \delta + o(1).
\]
An identical process can be repeated when all the edges point to $v$.

An analogous argument can be used to show that when $v$ has both in and out degrees,
\[
\frac{\Phi_{n, p}(H, \delta)}{n^2 p^\Delta \log(1/p)}  = 2\delta + o(1).
\]
\end{example}

\begin{example}[Triangles]\label{ex:triangle}

Up to isomorphism, there are two directed triangles. In both cases, we have
\[
\frac{\Phi_{n, p}(H, \delta)}{n^2 p^\Delta \log(1/p)} = \min\left\{\delta^{2/3}, \frac{2}{3} \delta \right\}  + o(1).
\]
To see this, first consider the following triangle $H$.

\begin{center}
    \begin{tikzpicture}[
        midarrow/.style={postaction={decorate,decoration={markings,mark=at position 0.5 with {\arrow{>}}}}}
      ]
        \coordinate (A) at (0,0);
        \coordinate (B) at (2,0);
        \coordinate (C) at (1,{sqrt(3)});
        
        \draw[midarrow] (B) -- (A);
        \draw[midarrow] (B) -- (C);
        \draw[midarrow] (C) -- (A);
      \end{tikzpicture}
\end{center}

We have
\[
    f_H(x_1 , x_2 , y_1 , y_2) = 1 + x_1 y_1^2 + x_2 y_2^2 + (x_1 \wedge x_2) y_1 y_2 
\]
\[
    g_H(x_1 , x_2 ,  y_1 , y_2) = 1 + x_1 y_1 + x_2 y_2 + (x_1 \wedge x_2) y_1 y_2 
\]
$G(H, \delta)$ is obtained when $y_1 = y_2 = 1$, implying that the upper and lower bounds must match. Further, $G(H, \delta) = 2/3 \delta$, given when $x_1 = x_2 = \delta/3$ and $y_1 = y_2 = 1$. A similar process can be done for the other triangle, or it can also be viewed as a special case of the next example.
\end{example}

\begin{example}[Balanced digraphs]\label{ex:balanced}
    Let $H$ be a balanced digraph (i.e. all vertices have equal number of in/out edges). In this case, we can apply Proposition~\ref{prop:2s} to solve to the variational problem. Indeed, suppose $S \in \mathcal{S}_H$ and let $F \in \mathcal{F}_H$ be the associated bipartite digraph. The maximum fractional matching $w_F \equiv 1 / \Delta$ gives $\sum_{e \in \mathsf{E}(S, T)} w_F (e) = \sum_{e \in \mathsf{E}(T, S)} w_F(e) = |S|/2$. This implies $a_S, b_S \ge |S| / 2$, satisfying the conditions of Proposition~\ref{prop:2s}. Therefore, the lower bound is obtained when $y_1 = y_2 = 1$. Thus, we have $G(H, \delta) = F(H, \delta)$.

    In fact, we can weaken the condition to only require vertices with maximum degree being balanced.
\end{example}

\begin{example}[$0 < y_1 < 1$]

Consider a bipartite digraph with $|H^*| = k$ and $|T| = k+1$. The following is an example of when $k = 3$.

\begin{center}
    \begin{tikzpicture}[thick,
        every node/.style={draw,circle},
        fsnode/.style={fill=black},
        ssnode/.style={fill=black},
        every fit/.style={ellipse,draw,inner sep=-2pt,text width=2cm}
      ]
      \begin{scope}[start chain=going below,node distance=7mm]
      \foreach \i in {1,2,...,3}
        \node[fsnode,on chain] (f\i) {};
      \end{scope}
      
      \begin{scope}[xshift=4cm,yshift=0.5cm,start chain=going below,node distance=7mm]
      \foreach \i in {4,5,...,7}
        \node[ssnode,on chain] (s\i) {};
      \end{scope}
      
      \node [black,fit=(f1) (f3),label=below:$H^*$] {};
      \node [black,fit=(s4) (s7),label=below:$T$] {};
      
      \foreach \i in {1,2,...,3}
        \draw[black] (f\i) -- (s4);

        \foreach \i in {1,2,...,3}
        \foreach \j in {5,6,...,7}
        \draw[dotted] (f\i) -- (s\j);
      \end{tikzpicture}
\end{center}

Here, solid lines denote edges going from $H^*$ to $T$, and dotted lines from $T$ to $H^*$. For larger $k$, consider an analogous digraph where one vertex in $T$ as only in-edges and all other vertices have only out-edges.

Clearly, $\mathsf{V}^+(H^*) = \mathsf{V}^-(H^*) = \emptyset$ and $H^*$ is an independent set. Further, $a_S = 1$ and $b_S = |S|$ for any nonempty $S \in \mathcal{S}_H$ (so $a_S \le b_S$). As such, it is always optimal to choose $y_2 = 1$ for the lower bound. Since $A_S = a_S = 1$ always, this implies that we get matching upper and lower bounds. In particular, this means that 
\[
    \lim_{n \to \infty} \frac{\Phi_{n, p}(H, \delta)}{n^2 p^\Delta \log (1/p)} = \inf_{\substack{0 \le x \\ 0 \le y_1 \le 1}} \left\{ x(1 + y_1) : 1 + y_1[(1 + x)^{k} - 1] = 1 + \delta \right\}.
\]
This gives examples of solutions where $0 < y_1 < 1$. For instance, when $k = 3$ and $\delta = 100$, we have $y_1 \approx 0.691$ (by numerical calculations).
\end{example}

\subsection{Gaps between upper and lower bounds}
\begin{example}
    Consider a bipartite digraph with $|H^*| = k+1$ and $|T| = k+2$. The following is an example of when $k = 3$.

    \begin{center}
        \begin{tikzpicture}[thick,
            every node/.style={draw,circle},
            fsnode/.style={fill=black},
            ssnode/.style={fill=black},
            every fit/.style={ellipse,draw,inner sep=-2pt,text width=2cm}
          ]
          \begin{scope}[start chain=going below,node distance=7mm]
          \foreach \i in {1,2,...,4}
            \node[fsnode,on chain] (f\i) {};
          \end{scope}
          
          \begin{scope}[xshift=4cm,yshift=0.5cm,start chain=going below,node distance=7mm]
          \foreach \i in {1,2,...,5}
            \node[ssnode,on chain] (s\i) {};
          \end{scope}
          
          \node [black,fit=(f1) (f4),label=below:$H^*$] {};
          \node [black,fit=(s1) (s5),label=below:$T$] {};
          
          \foreach \i in {1,2,...,3}
            \draw[black] (f\i) -- (s1);
          \draw[black] (f4) -- (s2);
          \draw[black] (f4) -- (s3);  
          \foreach \i in {1,2,...,4}
          \foreach \j in {1,2,...,5}
          \draw[gray, dotted] (f\i) -- (s\j);
          \end{tikzpicture}
    \end{center}

Again, solid lines denote edges going from $H^*$ to $T$, and dotted lines from $T$ to $H^*$. For larger $k$, consider an analogous digraph where $v_1 , \dots , v_k \in H^*$ each have one out-edge to $u_1 \in T$, while $v_{k+1}$ has out-edges to $u_2$ and $u_3$. It is straightforward to check that the infimum for $f_H$ and $g_H$ are obtained when $y_2 = 1$, and that
\[
    f_H(x_1 , x_2 , y_1 , 1) = [1 + ((1 + (x_1 \wedge x_2))^k - 1)y_1](1 + (x_1 \wedge x_2) y_1^2)
\]
\[
    g_H(x_1 , x_2 , y_1 , 1) = [1 + ((1 + (x_1 \wedge x_2))^k - 1)y_1](1 + (x_1 \wedge x_2) y_1)
\]

Since $f_H > g_H$ for for $0 < y_1 < 1$, we can expect to see a gap between bounds. Indeed, when $k = 5$ and $\delta = 10000$, we get $F(H, \delta) \approx 7.283$ while $G(H, \delta) \approx 7.031$.
\end{example}

\end{document}